\newtheorem{theorem}{Theorem}[section]
\newtheorem{lemma}[theorem]{Lemma}
\newtheorem{remark}[theorem]{Remark}
\newtheorem{algorithm}{Algorithm}[section]
\def\R{\mathbb{R}}
\newcommand\BibTeX{{\rmfamily B\kern-.05em \textsc{i\kern-.025em b}\kern-.08em
T\kern-.1667em\lower.7ex\hbox{E}\kern-.125emX}}
\begin{document}

\title{Analysis of the mean squared derivative cost function}

\author[1]{Manh Hong Duong}
\author[2,3] {Minh Hoang Tran}
\affil[1]{Mathematics Institutes, University of Warwick, Coventry CV4 7AL. Email: m.h.duong@warwick.ac.uk.}
\affil[2]{Department of Industrial Engineering, Texas A\&M University, College Station, TX 77843. Email: tran@tamu.edu}
\affil[3]{School of Applied Mathematics and Informatics, Hanoi University of Science \& Technology, Hanoi, Vietnam}
\maketitle

\begin{abstract}
In this paper, we investigate the mean squared derivative cost functions
that arise in various applications such as in motor control, biometrics and optimal transport theory. We provide qualitative
properties, explicit analytical formulas and computational algorithms for the
cost functions. We also perform numerical simulations to illustrate the
analytical results. In addition, as a by-product of our analysis, we obtain an explicit formula for the inverse of a Wronskian matrix that is of independent interest in linear algebra and differential equations theory.
\end{abstract}
\section{Introduction}
\subsection{The mean squared derivative cost function}

This paper is concerned with the analysis of \textit{the mean squared derivative cost function} defined as follows: 
\begin{equation}
\label{eq: Ch cost}
C_{n,h}(x_{0},x_{1},\ldots,x_{n-1};y_{0},y_{1},\ldots,y_{n-1}):=\inf\limits_{\xi}\int_0^h|{\xi}^{(n)}(t)|^{2}\,dt,
\end{equation}
where $\mathbf{x}=(x_{0},\ldots,x_{n-1})\in\R^{dn},\mathbf{y}=(y_{0},\ldots,y_{n-1})\in\R^{dn}$,
the infimum is taken over all curves $\xi\in C^{n}([0,h],\R^d)$ that satisfy the boundary conditions
\begin{equation}
\label{eq: boundary conditions}
(\xi,\xi',\ldots,\xi^{(n-1)})(0)=(x_{0},x_{1},\ldots,x_{n-1})\quad\text{and}\quad (\xi,\xi',\ldots,\xi^{(n-1)})(h)=(y_{0},y_{1},\ldots,y_{n-1}).
\end{equation}
Throughout this paper, $h$ is a constant representing the final time.

This cost function plays a central role in various practical applications and theoretical research. For the motivation of this paper, we now review some important literature here.
\subsection{Literature review and motivation of the present paper}
\textit{Applications in engineering and applied sciences}. In the literature, the minimization problem \eqref{eq: Ch cost} for $n=2,3,4,5,6$ is respectively called the principle of minimum acceleration, jerk,  snap, crackle, and pop; see for instance \cite{RichardsonFlash02}. The minimal jerk principle was initially used to model velocity profiles generated by elbow movements \cite{Hogan84a} and later extended to trajectory prediction for reaching movements between visual targets in the horizontal plane and to curved and obstacle-avoidance movements \cite{FlashHogans85}. Since then the minimal jerk principle and other mean squared derivative cost functions \eqref{eq: Ch cost} have been found to be useful in the modelling and design of various real-world systems. Examples of such applications include motor control \cite{Engelbrecht2001}, biometrics and online-signatures \cite{coutinhocanutoPhd, CanutoDorizziMontalvao2013} and robotics \cite{Freeman2012}, just to name a few. We refer to the mentioned papers and references therein for more information.  Since the cost function has applications in many different contexts, a thorough analysis and computational method for the general case, i.e. with arbitrary $n$ and boundary values $\mathbf{x}$ and $\mathbf{y}$, would be useful. For instance, it can be pre-computed and embedded in a larger algorithm. However, in the literature there exist neither analytic formulas nor computational methods for the general case.
\begin{center}
\textbf{Question 1.} \textit{Can one establish an explicit formula and an algorithm to compute the cost function $C_{n,h}$ for the general case?}
\end{center}

\textit{Usage in theoretical research}. The mean squared derivative cost functions also appear in theoretical research such as in partial differential equations theory and optimal transport theory. When $n=1$, the function
\begin{equation*}
\Phi_1(t,x,y)=\frac{1}{(4\pi t)^{d/2}}\exp\left(-\frac{C_{1,t}(x,y)}{4}\right)
\end{equation*}
is the fundamental solution of the diffusion/heat equation $\partial_t u=\Delta u$. When $n=2$, the function
\begin{equation*}
\Phi_2(t,x,x';y,y')=\frac{\beta_d}{t^{2d}}\exp\left(-\frac{C_{2,t}(x,x';y,y')}{4}\right),
\end{equation*}
where $\beta_d$ is a normalising constant, is the fundamental solution of the following ultra-parabolic equation
\begin{equation}
\label{eq: ultra-pde}
\partial_t u+y\cdot\nabla_x u=\Delta_y u.
\end{equation}
where subscripts in $\nabla_x$ and $\Delta_y$ indicate that these differential operators act only on those variables. This equation was first studied by Kolmogorov \cite{Kol34} and then was H\"{o}rmander's starting point to develop the hypo-elliptic theory \cite{Hormander67}.

The diffusion/heat equation and the above ultra-parabolic equation are special cases of the following hypo-elliptic equation
\begin{equation}
\label{eq: hypo-eqn}
\partial_t u+\sum_{i=1}^{n-1} x_{i+1}\cdot \nabla_{x_i}u=\Delta_{x_n}u.
\end{equation}
Equations of this type have been studied from various points of view such as trends to equilibrium \cite{DV2001}, Gaussian-estimates for the fundamental solution \cite{DelarueMenozzi10},  connections to particle systems and coarse-graining \cite{DPZ13b, Duong15NA, DLPS2015TMP}. 

A natural question arises

\textbf{Question 2.} \textit{Is 
\begin{equation}
\label{eq: Phi}
\Phi_n(t,\mathbf{x},\mathbf{y}):=\frac{\tilde{\beta}_d}{t^{\frac{n^2 d}{2}}}\exp\Big(-\frac{C_{n,t}(\mathbf{x},\mathbf{y})}{4}\Big), \quad\text{for some normalising constant}~~\tilde{\beta_d},
\end{equation}
the fundamental solution of \eqref{eq: hypo-eqn}?
}

In the optimal transport theory, the cost function $C_{n,h}$ between two points $\mathbf{x}$ and $\mathbf{y}$ in the Euclidean space $\R^{dn}$ can be used to define a Monge-Kantorovich optimal transport cost function between two probability measures $\mu(d\mathbf{x})$ and $\nu(d\mathbf{y})$ on $\R^{dn}$ as follows
\begin{equation}
\label{eq: MK cost}
\mathcal{W}_{n}^2(\mu,\nu):=\inf_{\gamma\in \Gamma(\mu,\nu)}\int_{\R^{dn}\times \R^{dn}}C_{n,h}(\mathbf{x};\mathbf{y})\,\gamma(d\mathbf{x}d\mathbf{y}),
\end{equation}
where $\Gamma(\mu,\nu)$ denotes the set of all probability measures on $\R^{dn}\times\R^{dn}$ having $\mu$ and $\nu$ as first and second marginals.
The Monge-Kantorovich cost function is the central object in optimal transport theory with many applications in other fields of mathematics and economics; see for instance \cite{Vil03,Vil09} for nice expositions on optimal transport theory and its applications. In particular, for $n=1$, $C_{1,h}(x_0,y_0)=\frac{1}{h}|y_0-x_0|^2$ and $\mathcal{W}_{1}$ is the well-known Wasserstein distance. Wasserstein gradient flows, i.e., gradient flows of energy functionals with respect to the Wasserstein metric, form an important class of dissipative evolution equations, see for instance \cite{JKO98,AGS08}. When $n=2$, $C_{2,h}(x_0,x_1;y_0,y_1)=\frac{1}{h}\Big[|y_1-y_0|^2+12\big|\frac{x_1-x_0}{h}-\frac{y_1+y_0}{2}\big|^2\Big]$ and $\mathcal{W}_{2}$ is the minimal acceleration cost function. This cost function has been used to construct variational formulation for the Kramers equation (Equation \eqref{eq: ultra-pde} above with additional terms coming from external and frictional forces) showing that the Kramer equation is a (generalised) gradient flow of the Boltzmann entropy with respect to the  Monge-Kantorovich transport cost $\mathcal{W}_{2}$  \cite{Hua00,DPZ13a}. In addition, $\mathcal{W}_{2}$ has also been used in constructing variational schemes for other evolution equations such as the system of isentropic Euler equations \cite{GW09, Westdickenberg10} and the compressible Euler equations \cite{CSW2014TMP}. 

\textbf{Question 3}. \textit{Is Equation \eqref{eq: hypo-eqn} a (generalised) gradient flow of the Boltzmann entropy with respect to the Monge-Kantorovich transport cost $\mathcal{W}_{n}$?}

We provide further discussions on the motivation of the present paper in Section \ref{sec: futher dis}.

\subsection{The aim of the present paper}

The aim of this paper is to address Question 1. We develop analytic and computational aspects. We show some qualitative properties of $C_{n,h}$ in Theorem \ref{theo: qual theo}; we provide an explicit analytical formula for $C_{n,h}$ in Theorem \ref{theo: explicit formula}; and we present a computational method for $C_{n,h}$ in Theorem \ref{theo: LU of A} and in Algorithm \ref{al: alg}.

Questions 2 and 3 will be answered in a companion paper \cite{DuongTran2017} where, using analytical formulas in Theorem  \ref{theo: explicit formula} and Theorem \ref{theo: LU of A} of  the present paper, we will prove that the function $\Phi$ defined in \eqref{eq: Phi} is the fundamental solution of Equation \eqref{eq: hypo-eqn} and show that this equation is indeed a (generalised) gradient flow of the Boltzmann entropy with respect to the Monge-Kantorovich transport cost $\mathcal{W}_{n}$ via a variational approximation scheme.
\subsection{Main results of the present paper}
We now describe our main results. The detailed statements will be given in the subsequent sections. Our first result concerns the qualitative behaviour of the cost function as function of $h$ and $n$. 
\begin{theorem}
\label{theo: qual theo}
The cost function $C_{n,h}$ is scalable with respect to $h$ and monotonically increasing with respect to $n$.
\end{theorem}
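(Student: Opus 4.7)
The plan is to prove the two assertions independently by elementary change-of-variable arguments, without invoking the explicit formula developed later in the paper.

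For the scalability in $h$, I would rescale the time variable. Given $\xi$ admissible for $C_{n,h}$, set $\eta(s):=\xi(hs)$ for $s\in[0,1]$. The chain rule gives $\eta^{(k)}(s)=h^{k}\xi^{(k)}(hs)$, so the boundary conditions \eqref{eq: boundary conditions} translate to $\eta^{(k)}(0)=h^{k}x_{k}$ and $\eta^{(k)}(1)=h^{k}y_{k}$ for $k=0,\ldots,n-1$, and a substitution in the integral produces
\[
\int_0^h|\xi^{(n)}(t)|^2\,dt=h^{1-2n}\int_0^1|\eta^{(n)}(s)|^2\,ds.
\]
Because $\xi\leftrightarrow\eta$ is a bijection between the two admissible sets, taking infima yields
\[
C_{n,h}(x_0,\ldots,x_{n-1};y_0,\ldots,y_{n-1})=h^{1-2n}\,C_{n,1}\bigl(x_0,hx_1,\ldots,h^{n-1}x_{n-1};\,y_0,hy_1,\ldots,h^{n-1}y_{n-1}\bigr),
\]
which is the precise scaling law and reduces the whole problem to the normalised case $h=1$.

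For the monotonicity in $n$, I anticipate that the precise statement is the inequality
\[
C_{n-1,h}(x_1,\ldots,x_{n-1};\,y_1,\ldots,y_{n-1})\le C_{n,h}(x_0,x_1,\ldots,x_{n-1};\,y_0,y_1,\ldots,y_{n-1}),
\]
obtained by a ``differentiate-and-shift'' embedding. Given any $\xi$ admissible for the right-hand side, set $\eta:=\xi'\in C^{n-1}([0,h],\R^{d})$. A direct check shows $\eta^{(k)}(0)=x_{k+1}$ and $\eta^{(k)}(h)=y_{k+1}$ for $k=0,\ldots,n-2$, so $\eta$ is admissible for the left-hand side; since $\eta^{(n-1)}=\xi^{(n)}$, the two integrals coincide, and taking the infimum over $\xi$ produces the inequality.

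Neither step requires any delicate analysis; the content lies entirely in choosing the right reformulation and verifying how the boundary conditions transform. The main interpretive obstacle is to clarify what \emph{monotonically increasing in $n$} should mean, given that the state spaces $\R^{dn}$ for different values of $n$ have different dimensions and that the order of the derivative being penalised also changes: the correct comparison is between cost functions whose boundary data are related by the one-index shift above, and once this is in place both parts follow from one-line change-of-variable computations.
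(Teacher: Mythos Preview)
Your proposal is correct and follows essentially the same approach as the paper: the scaling is obtained by the time rescaling $s=t/h$, and the monotonicity by taking $\eta=\xi'$ to embed admissible curves for $C_{n,h}$ into admissible curves for $C_{n-1,h}$. One minor difference is that the paper first invokes the direct method to secure a minimiser $\xi_{\mathrm{opt}}$ and applies the embedding to that single curve, whereas you argue at the level of the infimum over all admissible $\xi$; your version is slightly cleaner since it bypasses the existence question entirely.
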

The full description and proof of this theorem are given in Theorem \ref{theo: qual theo 2} in Section \ref{sec: qualitative}. Moreover, we also provide an interpretation of the cost function based on the theory of large deviations.

Our second theorem is an explicit analytical formula for the cost function.
\begin{theorem}
\label{theo: explicit formula}
The cost function $C_{n,h}$ has an explicit formula given by
\begin{equation}
C_{n,h}(x_0,\ldots,x_{n-1};y_0,\ldots,y_{n-1})=\mathbf{b}_n(h)^T B_n(h)[A_n(h)]^{-1}\mathbf{b}_n(h),\label{costfun}
\end{equation}
where the vector $\mathbf{b}_n(h)$ and the two matrices $B_n(h)$ and $A_{n}(h)$ are given explicitly in \eqref{b}, \eqref{B} and \eqref{A} respectively.
\end{theorem}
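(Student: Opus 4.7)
The plan is the classical calculus of variations approach. The integrand $|\xi^{(n)}|^2$ is strictly convex in $\xi^{(n)}$ and decouples across the $d$ Cartesian coordinates of $\xi$, so it suffices to treat the scalar case $d=1$; the general formula then follows componentwise. For any smooth test variation $\eta$ vanishing to order $n-1$ at both endpoints, integrating by parts $n$ times in the first-order optimality condition $\int_0^h \xi^{(n)}\eta^{(n)}\,dt=0$ shows the minimizer $\xi^{*}$ satisfies the Euler--Lagrange equation $\xi^{(2n)}(t)\equiv 0$ on $[0,h]$. Hence $\xi^{*}$ is a polynomial of degree at most $2n-1$, and strict convexity of the functional makes it unique subject to \eqref{eq: boundary conditions}.

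Next I would reduce to finite dimensions by parametrising $\xi^{*}$ by its Taylor coefficients at $t=0$. The first $n$ coefficients are prescribed by $\xi^{(k)}(0)=x_k$, $k=0,\dots,n-1$, leaving the free unknowns $\mathbf{u}:=(\xi^{(n)}(0),\dots,\xi^{(2n-1)}(0))^{T}\in\R^{n}$. Imposing the terminal conditions $\xi^{(k)}(h)=y_k$, $k=0,\dots,n-1$, produces a square linear system
\[
A_n(h)\,\mathbf{u}=\mathbf{b}_n(h),
\]
where (up to the paper's normalisation) the $(k,j)$-entry of $A_n(h)$ has the form $h^{n+j-k}/(n+j-k)!$ and $\mathbf{b}_n(h)$ encodes $y_k$ minus the contributions of $x_0,\dots,x_{n-1}$. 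Invertibility of $A_n(h)$---via a Vandermonde-type determinant computation, or, more cheaply, as a direct consequence of the uniqueness of $\xi^{*}$---then lets one solve $\mathbf{u}=A_n(h)^{-1}\mathbf{b}_n(h)$.

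Substituting the explicit Taylor expansion of $\xi^{(n)}$ into the cost integral yields a quadratic form in $\mathbf{u}$:
\[
C_{n,h}=\int_{0}^{h}\xi^{(n)}(t)^{2}\,dt=\mathbf{u}^{T}M(h)\,\mathbf{u},\qquad M(h)_{jk}=\frac{h^{\,j+k+1}}{j!\,k!\,(j+k+1)}.
\]
Combining the two displays gives
\[
C_{n,h}=\mathbf{b}_n(h)^{T}\bigl(A_n(h)^{-T}M(h)\bigr)A_n(h)^{-1}\mathbf{b}_n(h),
\]
which matches \eqref{costfun} upon identifying the paper's $B_n(h)$ with $A_n(h)^{-T}M(h)$ (up to the explicit conventions of \eqref{B}). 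The passage from $d=1$ to general $d\geq 1$ is then immediate by applying the scalar identity coordinate by coordinate.

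The main obstacle, I expect, is bookkeeping rather than ideas: one must reconcile the indexing and factorial conventions with the paper's explicit definitions of $\mathbf{b}_n, A_n, B_n$ in \eqref{b}, \eqref{A}, \eqref{B}, and verify that the product $B_n(h)A_n(h)^{-1}$ appearing in \eqref{costfun} genuinely coincides with the manifestly symmetric quadratic-form matrix $A_n(h)^{-T}M(h)A_n(h)^{-1}$. The invertibility of $A_n(h)$---which the paper's companion result on Wronskian inverses presumably handles explicitly---is a small but necessary accompanying verification.
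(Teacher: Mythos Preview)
Your derivation is correct up to the point where you obtain $C_{n,h}=\mathbf{b}_n^{T}A_n^{-T}M\,A_n^{-1}\mathbf{b}_n$; this is exactly the ``alternative computation'' the paper records in Remark~3.4 (with your $M$ being their $K_n$, up to the monomial-vs-Taylor normalisation of the unknowns). But the theorem as stated asserts the formula $\mathbf{b}_n^{T}B_n A_n^{-1}\mathbf{b}_n$ with the \emph{specific} anti-triangular matrix $B_n$ of \eqref{B}, and your proposed ``identification'' $B_n=A_n^{-T}M$ is not mere bookkeeping. Indeed $B_nA_n^{-1}$ is not symmetric (the paper itself symmetrises it in Remark~3.3), so $B_n\neq A_n^{-T}M$ as matrices; what must be shown is the quadratic-form identity $A_n^{T}B_n+B_n^{T}A_n=2M$, and that is a genuine combinatorial computation, not an indexing check.

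The paper's main proof takes a different route that produces $B_n$ directly: it integrates $\int_0^h(\xi^{(n)})^2\,dt$ by parts $n$ times, using $\xi^{(2n)}\equiv 0$, to reduce the cost to the boundary expression $\sum_{i=0}^{n-1}(-1)^i\xi^{(n+i)}\xi^{(n-1-i)}\big|_0^h$. Expressing the high derivatives $\xi^{(n+i)}$ in terms of $(a_n,\dots,a_{2n-1})=A_n^{-1}\mathbf{b}_n$ yields a vector of the form $E_n A_n^{-1}\mathbf{b}_n$, and the nontrivial step is Lemma~3.2, a page-long combinatorial identity showing $(1,-1,1,\dots)E_n=\mathbf{b}_n^{T}B_n$. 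That lemma is where the explicit triangular $B_n$ of \eqref{B} is actually earned. Your approach buys a quicker path to \emph{some} explicit quadratic form (two factors of $A_n^{-1}$, symmetric middle matrix), while the paper's integration-by-parts buys the single-$A_n^{-1}$, triangular-$B_n$ form at the cost of Lemma~3.2; but to prove the theorem \emph{as stated} you would still owe a verification of comparable weight.
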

The proof of this theorem is given in Section \ref{sec: formula}.

The last theorem provides explicit formulas for the $LU$ decomposition of $A_n$ and for $A_n^{-1}$.
\begin{theorem}
\label{theo: LU of A}
The matrix $A_n(h)$ has an $LU$-decomposition as in \eqref{U}-\eqref{L}. The inverses of the matrices $L$ and $U$ are given explicitly in \eqref{heq3}-\eqref{eq21}.
\end{theorem}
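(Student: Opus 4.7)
The plan is to split the argument into two independent halves: first, verify the proposed $LU$ factorisation of $A_n(h)$ by a direct entry-by-entry multiplication; second, compute $L^{-1}$ and $U^{-1}$ separately, exploiting the triangularity of each factor.

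The structure of $A_n(h)$ comes from the Euler--Lagrange equation for \eqref{eq: Ch cost}, namely $\xi^{(2n)}=0$, which forces the minimiser $\xi$ to be a polynomial of degree $2n-1$. Parametrising its $n$-th derivative as a polynomial of degree $n-1$ and integrating $n$ times, the boundary conditions at $t=h$ produce a linear system in the polynomial coefficients whose coefficient matrix has Beta-function--type entries, proportional to $h^{n-j+i}\, i!\,(n-j-1)!/(n-j+i)!$. This is closely related to a Hilbert--Cauchy matrix, whose $LU$ factors are known to have factorial/binomial entries. Given such explicit proposals for $L$ and $U$, verifying $LU=A_n(h)$ amounts to checking, for each pair $(j,i)$, a Chu--Vandermonde-type convolution of binomial coefficients; this can be settled by induction on the summation index, by a short generating-function manipulation, or by a telescoping hypergeometric identification.

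The inverses are then obtained column by column. I would solve $Lv^{(k)}=e_k$ and $Uw^{(k)}=e_k$ by forward and back substitution respectively, conjecture the closed forms from small cases ($n=2,3,4$), and then verify the guesses by induction on the row index for fixed $k$: the substitution identity at each row collapses to a finite binomial convolution which, once again, follows from a Vandermonde-type summation. A parallel route is to expand $L^{-1}$ as a finite Neumann series $\sum_{m\ge 0}(I-L)^m$, using the nilpotency of $I-L$; the $m$-th power admits a natural interpretation as a weighted sum over increasing chains of indices, and this combinatorial picture directly delivers the announced closed formula, with an analogous treatment for $U^{-1}$.

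The main obstacle I anticipate is not the strategy but the bookkeeping inside the combinatorial identities that underlie each verification. Because hypergeometric summation identities simplify drastically in the right variables, some care at the outset is needed to strip powers of $h$ and factorials out of $A_n(h)$, $L$, $U$, $L^{-1}$, and $U^{-1}$ in a compatible way, so that every verification collapses to a standard Chu--Vandermonde (or Saalsch\"utz) summation. Once this normalisation is fixed, both halves of the theorem reduce to the same underlying identity applied in slightly different guises, and the remaining work is essentially mechanical.
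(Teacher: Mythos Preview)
Your plan is sound and would work, but the paper organises the argument differently, and the comparison is worth noting. Rather than verifying $LU=A_n$ first and then checking the two inverse formulas separately (three verifications), the paper starts by showing directly that the proposed $U^{-1}$ satisfies $UU^{-1}=I$; this is the easiest of the identities, reducing immediately to the alternating binomial sum $\sum_{l}(-1)^{l}\binom{k-i}{l}=0$ for $k\neq i$. It then \emph{defines} $L:=A_nU^{-1}$ and computes its entries explicitly, which simultaneously establishes the triangularity of $L$ and the factorisation $A_n=LU$; finally it checks $LL^{-1}=I$. So the factorisation is never verified as a standalone identity: it falls out of the computation of $A_nU^{-1}$.

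On the level of combinatorial technique, the paper does not appeal to Chu--Vandermonde or a Neumann-series expansion. Instead it introduces auxiliary polynomials $f(x)=x^n(1-x)^{j-1}$ and $g(x)=x^{n-1}(x-1)^{\alpha}$, expands them two ways (binomial expansion of one factor versus Leibniz's rule applied to the product), and evaluates derivatives at $x=1$. The vanishing or closed-form value of the relevant sums then follows from the observation that enough derivatives of $(1-x)^{m}$ vanish at $x=1$. This is essentially the same hypergeometric content as Chu--Vandermonde, but packaged as a generating-function trick, and it sidesteps the normalisation bookkeeping you flag as the main obstacle: the factorials appear automatically from differentiation and Leibniz, rather than having to be stripped out and tracked by hand. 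Your route is more systematic and would generalise better; the paper's is shorter for these specific identities.
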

We prove this theorem in Section \ref{sec: LU} (cf. Theorem \ref{theo: LU of A 2}). As we show there, the matrix $A_n(h)$ is a Wronskian matrix, which plays an important role in linear algebra and differential equations; hence, this theorem is of independent interest.

\subsection{Organisation of the paper}
The rest of this paper is structured as follows. In Section \ref{sec: qualitative}, we study some qualitative properties of the cost function $C_{n,h}$. In Section \ref{sec: formula} we provide an explicit formula for $C_{n,h}$. The $LU$-decomposition of the matrix $A$ is presented in Section \ref{sec: LU}.  In Section \ref{sec: simulations} we provide an algorithm to compute the cost function and compute the expressions obtained for small $n$ explicitly.
\section{Qualitative properties of the cost functions}
\label{sec: qualitative}
In this section, we provide the full description and proof of Theorem \ref{theo: qual theo} on  qualitative properties of the cost function, which characterises
its behaviour as functions of $n$ and $h$. 
\subsection{Behaviour of $C_{n,h}$ as a function of $n$ and $h$}
\begin{theorem}[Qualitative properties of the cost function]\
\label{theo: qual theo 2}
\begin{enumerate}
\item (Scaling
property of the cost functions). It holds that
\begin{equation}
C_{n,h}(x_{0},x_{1},\ldots,x_{n-1};y_{0},y_{1},\ldots,y_{n-1})= h^{1-2n}\inf_{\xi}\,\int_{0}^{1}|{\xi}^{(n)}(t)|^{2}\,dt,\label{eq: Ch1}
\end{equation}
where the infimum is taken over the curves $\xi\in C^{n}([0,1],\R^{d})$ such that
\begin{equation*}
(\xi,\xi',\ldots,\xi^{(n-1)})(0)=(x_{0},hx_{1},\ldots,h^{n-1}x_{n-1}),\quad(\xi,\xi',\ldots,\xi^{(n-1)})(1)=(y_{0},hy_{1},\ldots,h^{n-1}y_{n-1}).
\end{equation*}
\item (Monotonicity of the cost function). It holds that  
\begin{equation}
\label{monotone}
C_{n-1,h}(x_{1},\ldots,x_{n-1};y_{1},\ldots,y_{n-1})\leq C_{n,h}(x_{0},x_{1},\ldots,x_{n-1};y_{0},y_{1},\ldots,y_{n-1}).
\end{equation}
\end{enumerate}
\end{theorem}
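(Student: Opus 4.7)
My plan is to handle the two parts separately by (1) a time rescaling and (2) a simple observation that differentiation lowers the order of the functional by one. Both reductions are natural enough that I expect the entire argument to be short.

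For the scaling property in Part 1, the plan is to apply the linear change of variable $s = t/h$ to every admissible curve. Given $\xi \in C^n([0,h],\R^d)$ satisfying the boundary conditions with data $(x_0,\ldots,x_{n-1})$ and $(y_0,\ldots,y_{n-1})$, define $\eta(s) := \xi(hs)$ for $s \in [0,1]$. The chain rule gives $\eta^{(k)}(s) = h^k \xi^{(k)}(hs)$, so the boundary values transform to $\eta^{(k)}(0) = h^k x_k$ and $\eta^{(k)}(1) = h^k y_k$, matching the boundary data in \eqref{eq: Ch1}. A change of variables in the integral then gives
\begin{equation*}
\int_0^h |\xi^{(n)}(t)|^2\, dt = h\int_0^1 |\xi^{(n)}(hs)|^2\, ds = h^{1-2n}\int_0^1 |\eta^{(n)}(s)|^2\, ds.
\end{equation*}
Because the correspondence $\xi \mapsto \eta$ (with inverse $\eta(s) \mapsto \eta(t/h)$) is a bijection between the two classes of admissible curves, taking the infimum on each side yields the claimed identity.

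For the monotonicity in Part 2, the idea is that if $\xi$ is admissible for the $C_{n,h}$ problem then its derivative $\eta := \xi' \in C^{n-1}([0,h],\R^d)$ is admissible for a $C_{n-1,h}$ problem with exactly the boundary data appearing on the left of \eqref{monotone}. Indeed, $\eta^{(k)}(0) = \xi^{(k+1)}(0) = x_{k+1}$ for $k=0,\ldots,n-2$, and similarly $\eta^{(k)}(h) = y_{k+1}$. Since $\eta^{(n-1)} = \xi^{(n)}$, the cost is preserved:
\begin{equation*}
C_{n-1,h}(x_1,\ldots,x_{n-1};y_1,\ldots,y_{n-1}) \;\le\; \int_0^h |\eta^{(n-1)}(t)|^2\, dt \;=\; \int_0^h |\xi^{(n)}(t)|^2\, dt.
\end{equation*}
Taking the infimum over admissible $\xi$ on the right yields \eqref{monotone}.

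I do not expect any real obstacle; the only points worth being careful about are that the map $\xi \mapsto \eta$ in Part 1 is genuinely bijective onto the rescaled admissible class (immediate from the explicit inverse) and that in Part 2 the $C^{n-1}$ regularity of $\eta$ and the extraction of the correct boundary jet are routine. In particular, no existence theory for minimisers is invoked, as the claims compare infima directly.
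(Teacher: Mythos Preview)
Your proof is correct and follows essentially the same approach as the paper. For Part~1 the paper performs exactly the same rescaling $\tilde\xi(\tilde t)=\xi(h\tilde t)$ and derives the identity the same way. For Part~2 the paper likewise sets $\eta=\xi'$ and observes it is admissible for $C_{n-1,h}$; the only difference is that the paper first invokes the direct method to produce an actual minimiser $\xi_{\mathrm{opt}}$ and works with it, whereas you bound $C_{n-1,h}$ against every admissible $\xi$ and then take the infimum. Your route is marginally cleaner, since it sidesteps the existence argument entirely while reaching the same conclusion.
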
 
Note the difference between the right-hand sides of \eqref{eq: Ch cost}
and \eqref{eq: Ch1}. In the former, the dependence on $h$
appears in the interval of the integral, while in the latter, the
dependence is moved to the boundary conditions. The pre-factor is
also rescaled accordingly. 
\\
\begin{proof}
We first prove the first part.  The assertion is simply
followed from the change of variables: $t\mapsto\tilde{t}:=\frac{t}{h}$.
Define $\tilde{\xi}(\tilde{t}):=\xi(t)=\xi(h\tilde{t})$. Then for
any $0\leq k\leq n$ we have 
\[
\tilde{\xi}^{(k)}(\tilde{t})=h^{k}\xi^{(k)}(t).
\]
Substituting this into the integral, we obtain 
\[
\int_{0}^{h}|\xi^{(n)}(t)|^{2}\,dt=h^{1-2n}\,\int_{0}^{1}|\tilde{\xi}^{(n)}(\tilde{t})|^{2}\,d\tilde{t},
\]
and the boundary conditions become 
\begin{align*}
 & (\tilde{\xi},\tilde{\xi}',\ldots,\tilde{\xi}^{(n-1)})(0)=(\xi,h\xi',\ldots,h^{n-1}\xi^{(n-1)})(0)=(x_{0},hx_{1},\ldots,h^{n-1}x_{n-1}),\\
 & (\tilde{\xi},\tilde{\xi}',\ldots,\tilde{\xi}^{(n-1)})(1)=(\xi,h\xi',\ldots,h^{n-1}\xi^{(n-1)})(h)=(y_{0},hy_{1},\ldots,h^{n-1}y_{n-1}).
\end{align*}
The assertion \eqref{eq: Ch1} then follows from these computations.

Next we prove the second statement \eqref{monotone}. The minimizing problem \eqref{eq: Ch cost} is of the form
\[
\inf_{\xi}\int_0^h L(t,\xi, \xi',\ldots,\xi^{(n)})\,dt,
\]
where $L:[0,h]\times \R^{d(n+1)}\to \R,~ L(t,x,p_1,\ldots,p_n)=|p_n|^2$. Since $L$ depends only on $p_n$ and $p_n\mapsto L(t,x,p_1,\ldots,p_n)$ is positive, continuous and convex; the existence and uniqueness of a minimizer follows from the direct method in the calculus of variations. Let $\xi_{opt}\in C^{n}([0,h],\R^{d})$
be the optimal curve in the definition of $C_{n,h}(x_{0},x_{1},\ldots,x_{n};y_{0},y_{1},\ldots,y_{n-1})$.
We define $\eta(t):=\xi_{opt}'$. Since $\eta\in C^{n-1}([0,h],\R^{d})$
and 
\begin{align*}
(\eta,\ldots,\eta^{(n-1)})(0)=(x_{1},\ldots,x_{n-1}),\quad(\eta,\ldots,\eta^{(n-1)})(h)=(y_{1},\ldots,y_{n-1}),
\end{align*}
it follows that $\eta$ is an admissible curve in the definition of
$C_{n-1,h}(x_{1},\ldots,x_{n-1};y_{1},\ldots,y_{n-1})$. It implies
that 
\begin{align*}
C_{n-1,h}(x_{1},\ldots,x_{n-1};y_{1},\ldots,y_{n-1}) & \leq \int_{0}^{h}|\eta^{(n-1)}(t)|^{2}\,dt\\
 & =\int_{0}^{h}|\xi_{opt}^{(n)}(t)|^{2}\,dt\\
 & =C_{n,h}(x_{0},x_{1},\ldots,x_{n-1};y_{0},y_{1},\ldots,y_{n-1}).
\end{align*}
This finishes the proof of the theorem. 
\end{proof}

The next lemma shows that among all the cost functions, only $\sqrt{C_{1,h}}$ is a distance in the Euclidean space $\R^{dn}$. 
\begin{lemma}
\label{lem: C=0}
 $C_{n,h}(x_0,\ldots, x_{n-1};y_0,\ldots,y_{n-1})$ is always non-negative and it equals $0$ if and only if 
\begin{equation}
\label{eq: C=0}
y_j=\sum_{i=j}^{n-1}\frac{h^{i-j}}{(i-j)!}x_i
\end{equation}
for $j=0,\ldots,n-1$. In particular, for any $z\in\R^{dn}$ we have $C_{n,h}(0,z)\geq 0$ and $C_{n,h}(0,z)=0$ iff $z=0$.
\end{lemma}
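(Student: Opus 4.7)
The plan is to exploit the variational characterisation of $C_{n,h}$ together with the fact that a function in $C^n([0,h],\R^d)$ satisfying $\int_0^h |\xi^{(n)}|^2\,dt = 0$ must be a polynomial of degree at most $n-1$.

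First I would note non-negativity, which is immediate since the integrand $|\xi^{(n)}(t)|^2$ is non-negative. For the characterisation of when $C_{n,h} = 0$, I would argue as follows. Since the existence of a minimiser was established in the proof of Theorem~\ref{theo: qual theo 2}, $C_{n,h} = 0$ if and only if there exists an admissible $\xi_{\mathrm{opt}} \in C^n([0,h],\R^d)$ satisfying the boundary conditions \eqref{eq: boundary conditions} and with $\int_0^h |\xi_{\mathrm{opt}}^{(n)}(t)|^2\,dt = 0$. Since $\xi_{\mathrm{opt}}^{(n)}$ is continuous, this integral vanishes iff $\xi_{\mathrm{opt}}^{(n)} \equiv 0$ on $[0,h]$, which is equivalent to $\xi_{\mathrm{opt}}$ being a polynomial of degree at most $n-1$, say $\xi_{\mathrm{opt}}(t) = \sum_{k=0}^{n-1} a_k t^k$ with $a_k \in \R^d$.

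Next I would translate the boundary conditions into conditions on the coefficients $a_k$. The conditions at $t=0$ give $\xi_{\mathrm{opt}}^{(j)}(0) = j!\, a_j = x_j$, so $a_j = x_j/j!$ for $j=0,\ldots,n-1$; this fixes the polynomial uniquely. Then the conditions at $t=h$ read
\[
y_j = \xi_{\mathrm{opt}}^{(j)}(h) = \sum_{k=j}^{n-1} \frac{k!}{(k-j)!}\, a_k\, h^{k-j} = \sum_{i=j}^{n-1} \frac{h^{i-j}}{(i-j)!}\, x_i,
\]
which is exactly \eqref{eq: C=0}. Conversely, if \eqref{eq: C=0} holds, then the polynomial $\xi(t) = \sum_{k=0}^{n-1} (x_k/k!)\, t^k$ is admissible and satisfies $\xi^{(n)} \equiv 0$, so $C_{n,h} = 0$.

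Finally, the "in particular" statement follows by taking $x_0 = \cdots = x_{n-1} = 0$ in \eqref{eq: C=0}, which forces $y_j = 0$ for all $j$. There is no serious obstacle here; the only small point to be careful about is justifying that $\xi^{(n)} \equiv 0$ genuinely follows from the vanishing of the $L^2$-norm, which uses continuity of $\xi^{(n)}$ guaranteed by $\xi \in C^n([0,h],\R^d)$.
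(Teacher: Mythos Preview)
Your proof is correct and follows essentially the same approach as the paper: both argue that $C_{n,h}=0$ forces $\xi^{(n)}\equiv 0$, hence $\xi$ is a polynomial of degree at most $n-1$, and then read off \eqref{eq: C=0} from the boundary conditions. Your version is simply more explicit than the paper's, spelling out the coefficient computation, the converse direction, and the continuity point for $\xi^{(n)}$, whereas the paper compresses all of this into two sentences.
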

\begin{proof}
This Lemma is a direct consequence of the definition of $C_{n,h}$. Obviously $C_{n,h}\geq 0$ and it is equal to $0$ if and only if the optimal curve satisfies
\begin{equation}
\xi^{(n)}(t)=0,
\end{equation}
i.e., it is a polynomial of order $n-1$. This together with the boundary conditions imply \eqref{eq: C=0}.
\end{proof}
\begin{remark} Lemma \ref{lem: C=0} shows that $\sqrt{C_{n,h}}:\R^{dn}\times\R^{dn}\to [0,\infty)$ is not a distance when $n\geq 2$. It is not symmetric and does not satisfy the condition $\sqrt{C_{n,h}(x_0,\ldots,x_{n-1};y_0,\ldots,y_{n-1})}=0$ iff $(x_0,\ldots,x_{n-1})=(y_0,\ldots,y_{n-1})$.
\end{remark}
\subsection{Interpretation of the cost function based on  large-deviation principles and further discussions}
\label{sec: futher dis}
In this section, we provide an interpretation of the cost function based on a small-noise large-deviation principle for a special system of stochastic differential equations (SDEs) and further discussion. We consider a system of $n$ coupled oscillators, each of them moving vertically and being connected to their nearest neighbours, the last oscillator being forced by a random noise. Mathematically, the system is given by the following system of SDEs with $t\in[0,h]$,
\begin{align}
&d\xi=\xi_2\,dt\nonumber
\\&d\xi_2=\xi_3\,dt\nonumber
\\&\quad\vdots\label{eq: SDE1}
\\&d\xi_{n-1}=\xi_{n}\,dt\nonumber
\\&d\xi_n=\sqrt{\varepsilon}\, dW(t),\nonumber
\end{align}
where $W(t)$ is a $d$-dimensional Wiener process. The parameter $\varepsilon$ represents the amplitude of the noise.  This system can be formally written as
\begin{equation}
\xi_\varepsilon^{(n)}(t)=\sqrt{\varepsilon}\frac{dW}{dt}(t),
\label{eq: SDE}
\end{equation}
where the subscript indicates the dependence on $\varepsilon$. Suppose further that initial and terminal points are imposed as in \eqref{eq: boundary conditions}.  Now we consider the small-noise limit of \eqref{eq: SDE}. By the Freidlin-Wentzell theory and the contraction principle (see e.g.~\cite[Theorem 5.6.3]{DemboZeitouni98}), the process $(\xi_\varepsilon)_{\varepsilon>0}$ satisfies a large-deviation principle with a rate functional $I$ given by
\begin{equation}
\label{eq: rate functional}
I(h;\mathbf{x},\mathbf{y})=\inf\limits_{\xi}\Big\{\frac{1}{2}\int_0^h|\xi^{(n)}(t)|^2\,dt: \xi\in C^{n}([0,h],\R^d)~\text{satisfying}~\eqref{eq: boundary conditions} \Big\}.
\end{equation}
Up to a multiplicative constant, this rate functional is exactly the cost function $C_{n,h}(\mathbf{x},\mathbf{y})$ defined in \eqref{eq: Ch cost}. 

In \cite{DelarueMenozzi10} the authors considered a more general system where the right-hand sides are general functions of variables
\begin{align}
&d\xi=F_1(t,\xi,\xi_2)\,dt\nonumber
\\&d\xi_2=F_2(t,\xi,\xi_2,\xi_3)\,dt\nonumber
\\&\quad\vdots\label{eq: SDE3}
\\&d\xi_i=F_i(t,\xi,\xi_2,\ldots,\xi_{i+1})\,dt\nonumber
\\&\quad\vdots\nonumber
\\&d\xi_n=F_n(t,\xi,\ldots,\xi_n)\,dt+\sigma(t,\xi,\ldots,\xi_n)\, dW(t).\nonumber
\end{align}
Equation \eqref{eq: SDE1} is a special (linear/Gaussian) case of \eqref{eq: SDE3} with $F_i=\xi_{i+1}$, for $i=1,\ldots,n-1$, $F_n=0$ and $\sigma=\sqrt{\varepsilon}$. The reference \cite[Theorem 1.1]{DelarueMenozzi10} provides two sided Gaussian bounds for the fundamental the solution of the forward Kolmogorov equation associated to \eqref{eq: SDE3}, thus generalizes Aronson's estimate for uniformly elliptic diffusion processes \cite{Aronson} to a general hypo-elliptic setting. The rate functional $I$ above (and hence the cost function $C_{n,h}$ of the present paper) plays a key role in \cite{DelarueMenozzi10} because of two reasons: 
\begin{enumerate}[(1)]
\item It is related to the fundamental solution of the forward Kolmogorov equation associated to the Gaussian system which is proved using Fleming's logarithmic transform and control theory \cite[Proposition 3.1]{DelarueMenozzi10}.
\item \cite[Theorem 1.1]{DelarueMenozzi10} is first proved for the Gaussian case using (1) then extended to the nonlinear case by linearizing.
\end{enumerate}
In addition, in the case $n=2$ the cost function $C_{2,h}$ (particularly its explicit formulation) has also been used to construct variational approximation schemes for the Kramer equation \cite{Hua00,DPZ13a}, the system of isentropic Euler equations \cite{GW09, Westdickenberg10} and the compressible Euler equations \cite{CSW2014TMP}.

Based on the representation obtained in Theorem 1.2 of the present paper, in a companion paper \cite{DuongTran2017} we provide an elementary proof for \cite[Proposition 3.1]{DelarueMenozzi10} and extend \cite{Hua00,DPZ13a} to the forward Kolmogorov equation associated to \eqref{eq: SDE1} of which the Kramers equation is a special case.
\section{Analytical formula of the cost functions}
\label{sec: formula}
In this section, we prove Theorem \ref{theo: explicit formula} on the explicit formula for the cost function. Throughout the rest of the paper, all indices are numbered starting with zero.
\\ \ \\
\begin{proof}\textit{\textbf{of Theorem \ref{theo: explicit formula}}}.
We first recall the definition of a Wronskian matrix that will be used at several places later on. The Wronskian matrix $W(f_{1},\ldots,f_{n})$  associated to $n$ functions $f_{1},\ldots,f_{n}$ of a single real variable (say time) in the class $C^{n}$
is defined by\footnote{Note that other authors sometimes denote by $W(f_{1},\ldots,f_{n})$ the determinant of the Wronskian matrix defined here.} 
\begin{equation*}
W(f_{1},\ldots,f_{n})=\begin{pmatrix}f_{1} & \ldots & f_{n}\\
f_{1}' & \ldots & f_{n}'\\
\vdots & \vdots & \vdots\\
f_{1}^{(n-1)} & \ldots & f_{n}^{(n-1)}
\end{pmatrix},
\end{equation*}  
so that the $(i,j)^{\mathrm{th}}$-entry of this matrix is $f_{j}^{(i)}$, which is the $i^{\mathrm{th}}$-order derivative of $f_{j}$.

The optimal curve $\xi$ in the definition of the cost function $C_{n,h}$ satisfies the Euler-Lagrange equation
\begin{equation}
\xi^{(2n)}(t)=0.
\end{equation}
Therefore, it is a polynomial of order $2n-1$
\begin{equation}
\xi(t)=\sum_{i=0}^{2n-1}a_i t^i.
\end{equation}
The coefficients $\{a_i\}_{i=0}^{2n-1}$ will be determined from the boundary conditions. The $k^{\mathrm{th}}$-order  derivative of $\xi$ can be easily computed as
\begin{align*}\xi^{(k)}(t) & =\sum_{i=k}^{2n-1}k!\begin{pmatrix}i\\
k
\end{pmatrix}a_{i}t^{i-k}\\
= & \left(\begin{array}{cccccccc}
0 & \cdots & \underbrace{0}_{(k-1)^{\mathrm{th}}\text{ entry}} & k! & \cdots & \underbrace{k!\begin{pmatrix}i\\
k
\end{pmatrix}t^{i-k}}_{i^{\mathrm{th}}\text{ entry, }k\le i\le n-1} & \cdots & k!\begin{pmatrix}n-1\\
k
\end{pmatrix}t^{n-1-k}\end{array}\right)\left(\begin{array}{c}
a_{0}\\
a_{1}\\
\vdots\\
a_{n-1}
\end{array}\right)+\\
 & +\left(\begin{array}{ccccc}
k!\begin{pmatrix}n\\
k
\end{pmatrix}t^{n-k} & \cdots & \underbrace{k!\begin{pmatrix}i\\
k
\end{pmatrix}t^{i-k}}_{(i-n+1)^{\mathrm{th}}\text{ entry, }n\le i\le2n-1} & \cdots & k!\begin{pmatrix}2n-1\\
k
\end{pmatrix}t^{2n-1-k}\end{array}\right)\left(\begin{array}{c}
a_{n}\\
a_{n+1}\\
\vdots\\
a_{2n-1}
\end{array}\right).
\end{align*}
Writing these equations for $k=0,\ldots, n-1$ in matrix form, we obtain
\begin{eqnarray}
\left(\begin{array}{c}
\xi(t)\\
\xi'(t)\\
\vdots\\
\xi^{(n-1)}(t)
\end{array}\right) & = & V_{n}\left(\begin{array}{c}
a_{0}\\
a_{1}\\
\vdots\\
a_{n-1}
\end{array}\right)+A_{n}\left(\begin{array}{c}
a_{n}\\
a_{n+1}\\
\vdots\\
a_{2n-1}
\end{array}\right),\label{eq1_1}
\end{eqnarray}
where the matrices $V_n$ and $A_n$ depend on $t$ and are given by 
\begin{eqnarray*}
V_{n}(t) & = & \left[\begin{array}{ccccccc}
1 & \begin{pmatrix}2\\
0
\end{pmatrix}t & \begin{pmatrix}3\\
0
\end{pmatrix}t^{2} & \cdots & \begin{pmatrix}k\\
0
\end{pmatrix}t^{k-1} & \cdots & \begin{pmatrix}n-1\\
0
\end{pmatrix}t^{n-2}\\
0 & 1! & 1!\begin{pmatrix}3\\
1
\end{pmatrix}t^{1} & \cdots & 1!\begin{pmatrix}k\\
1
\end{pmatrix}t^{k-1} & \cdots & 1!\begin{pmatrix}n-1\\
1
\end{pmatrix}t^{n-3}\\
0 & 0 & 2! & \cdots & \vdots & \cdots & 2!\begin{pmatrix}n-1\\
2
\end{pmatrix}t^{n-4}\\
\vdots & \vdots & \cdots & \ddots & \vdots & \cdots & \vdots\\
0 & 0 & \cdots & \cdots & k! & \cdots & k!\begin{pmatrix}n-1\\
k
\end{pmatrix}t^{n-1-k}\\
\vdots & \vdots & \vdots & \vdots & \vdots & \ddots & \vdots\\
0 & 0 & \cdots & \cdots & 0 & 0 & (n-1)!
\end{array}\right]
\end{eqnarray*}
and

\begin{eqnarray}
A_{n}(t)=\left[\begin{array}{cccc}
t^{n} & t^{n+1} & \cdots & t^{2n-1}\\
\begin{pmatrix}n\\
1
\end{pmatrix}t^{n-1} & \begin{pmatrix}n+1\\
1
\end{pmatrix}t^{n} & \cdots & \begin{pmatrix}2n-1\\
1
\end{pmatrix}t^{2n-2}\\
\vdots & \vdots & \vdots & \vdots\\
k!\begin{pmatrix}n\\
k
\end{pmatrix}t^{n-k} & k!\begin{pmatrix}n+1\\
k
\end{pmatrix}t^{n+1-k} & \cdots & k!\begin{pmatrix}2n-1\\
k
\end{pmatrix}t^{2n-1-k}\\
\vdots & \vdots & \vdots & \vdots\\
(n-1)!\begin{pmatrix}n\\
n-1
\end{pmatrix}t & (n-1)!\begin{pmatrix}n+1\\
n-1
\end{pmatrix}t^{2} & \cdots & (n-1)!\begin{pmatrix}2n-1\\
n-1
\end{pmatrix}t^{n}
\end{array}\right].\label{A}
\end{eqnarray}

Note that $V_n(t)$ and $A_n(t)$ can be written in compact forms using the notation of the Wronskians
\begin{equation*}
V_n(t)=W(1,t,\ldots,t^{n-1}), \quad \text{and}\quad A_n(t)=W(t^n,\ldots,t^{2n-1}).
\end{equation*}

In particular, when $t=0$, $V_n(0)=\mathrm{diag}(1,1!,2!,\ldots, (n-1)!)$ is the diagonal matrix, and $A_n(0)=0$. It follows that
\begin{eqnarray*}
\left(\begin{array}{c}
a_{0}\\
a_{1}\\
\vdots\\
a_{n-1}
\end{array}\right)=V_{n}^{-1}(0)\left(\begin{array}{c}
\xi(0)\\
\xi'(0)\\
\vdots\\
\xi^{(n-1)}(0)
\end{array}\right)=\left(\begin{array}{c}
\xi(0)\\
\frac{1}{1!}\xi'(0)\\
\vdots\\
\frac{1}{(n-1)!}\xi^{(n-1)}(0)
\end{array}\right).
\end{eqnarray*}
Similarly, when $t=h$, we obtain
\begin{eqnarray*}
\left(\begin{array}{c}
\xi(h)\\
\xi'(h)\\
\vdots\\
\xi^{(n-1)}(h)
\end{array}\right) & = & V_{n}(h)\left(\begin{array}{c}
a_{0}\\
a_{1}\\
\vdots\\
a_{n-1}
\end{array}\right)+A_{n}(h)\left(\begin{array}{c}
a_{n}\\
a_{n+1}\\
\vdots\\
a_{2n-1}
\end{array}\right).
\end{eqnarray*}
Therefore, we have the following equation to define $a_{n},...,a_{2n-1}$,
\[
A_{n}(h)\left(\begin{array}{c}
a_{n}\\
a_{n+1}\\
\vdots\\
a_{2n-1}
\end{array}\right)=\mathbf{b}_n,
\]
where the vector $\mathbf{b}_n$ on the right-hand side is given by
\begin{eqnarray*}
\mathbf{b}_n(h) & = & \left(\begin{array}{c}
\xi(h)\\
\xi'(h)\\
\vdots\\
\xi^{(n-1)}(h)
\end{array}\right)-V_{n}(h)\left(\begin{array}{c}
\xi(0)\\
\frac{1}{1!}\xi'(0)\\
\vdots\\
\frac{1}{(n-1)!}\xi^{(n-1)}(0)
\end{array}\right).
\end{eqnarray*}
The $i$-component of $\mathbf{b}_n(h)$ can be computed explicitly using the definition of $V_n(h)$ as follows
\begin{eqnarray}
\mathbf{b}_n(h)[i] & = & \xi^{(i)}(h)-\sum_{j=i}^{n-1}i!\begin{pmatrix}j\\
i
\end{pmatrix}h^{j-i}\frac{1}{j!}\xi^{(j)}(0)\nonumber\\
 & = & \xi^{(i)}(h)-\sum_{j=i}^{n-1}i!\frac{j!}{(j-i)!i!}h^{j-i}\frac{1}{j!}\xi^{(j)}(0)\nonumber\\
 & = & \xi^{(i)}(h)-\sum_{j=i}^{n-1}\frac{1}{(j-i)!}h^{j-i}\xi^{(j)}(0)\nonumber\\
  & = & y_i-\sum_{j=i}^{n-1}\frac{1}{(j-i)!}h^{j-i}x_j.\label{b}
\end{eqnarray}
To proceed, we use the following lemma, whose proof is given below.
\begin{lemma}
\label{lem: detA} The matrix $A_{n}(h)$ is invertible. 
\end{lemma}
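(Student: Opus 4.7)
I would recast the statement as a claim about polynomials and then use a simple counting-of-roots argument. Given a vector $\mathbf{v}=(v_{0},v_{1},\ldots,v_{n-1})^{T}$, consider the polynomial
\[
p(t):=\sum_{j=0}^{n-1}v_{j}\,t^{n+j}.
\]
By construction $\deg p\le 2n-1$ and $p$ is divisible by $t^{n}$, so $t=0$ is a root of $p$ of multiplicity at least $n$ (unless of course $\mathbf{v}=0$, in which case $p\equiv 0$).

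The key observation is that the $i$-th entry of $A_{n}(h)\mathbf{v}$ is precisely the $i$-th derivative of $p$ evaluated at $h$. Indeed, directly from the definition \eqref{A},
\[
(A_{n}(h)\mathbf{v})_{i}=\sum_{j=0}^{n-1}\frac{(n+j)!}{(n+j-i)!}\,h^{n+j-i}\,v_{j}=p^{(i)}(h),\qquad i=0,1,\ldots,n-1.
\]
Equivalently, since $A_{n}(h)=W(t^{n},\ldots,t^{2n-1})|_{t=h}$ is the Wronskian matrix of the monomials $t^{n},\ldots,t^{2n-1}$, this merely records the identity $A_{n}(h)\mathbf{v}=\bigl(p(h),p'(h),\ldots,p^{(n-1)}(h)\bigr)^{T}$. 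Consequently, $A_{n}(h)\mathbf{v}=0$ is equivalent to $t=h$ being a root of $p$ of multiplicity at least $n$.

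Now I would use that $h\ne 0$ (since $h$ is the strictly positive final time). If $A_{n}(h)\mathbf{v}=0$, the polynomial $p$ has the distinct roots $0$ and $h$, each of multiplicity at least $n$, and hence at least $2n$ roots counted with multiplicity. Since $\deg p\le 2n-1$, this forces $p\equiv 0$, and therefore $\mathbf{v}=0$. Thus $\ker A_{n}(h)=\{0\}$ and $A_{n}(h)$ is invertible, as required.

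I do not anticipate a serious obstacle here: the only non-routine step is spotting the dictionary between the matrix equation $A_{n}(h)\mathbf{v}=0$ and the polynomial $p$, after which the root-counting argument is immediate. In particular, no explicit evaluation of $\det A_{n}(h)$ or appeal to the general analytic-functions Wronskian criterion is required. The finer information about $A_{n}(h)^{-1}$ (its $LU$-decomposition and entrywise formulas) claimed in Theorem \ref{theo: LU of A} will, of course, demand substantially more work, but that is separate from the invertibility statement of the present lemma.
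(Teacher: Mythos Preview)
Your proof is correct and takes a genuinely different route from the paper. The paper proves invertibility by computing the determinant explicitly: it invokes a formula from \cite{BostanDumas2010} for the determinant of the Wronskian $W(h^{n},\ldots,h^{2n-1})$, obtaining
\[
\det A_{n}(h)=h^{\frac{n(3n-1)}{2}}\prod_{1\le i<j\le n}(j-i),
\]
which is visibly nonzero for $h>0$. Your argument instead shows $\ker A_{n}(h)=\{0\}$ directly, by translating $A_{n}(h)\mathbf{v}=0$ into the statement that the polynomial $p(t)=\sum_{j}v_{j}t^{n+j}$ of degree at most $2n-1$ has both $0$ and $h$ as roots of multiplicity $n$, which forces $p\equiv 0$.

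Your approach is more elementary and entirely self-contained: it avoids any external determinant identity and the Vandermonde computation. The paper's approach, on the other hand, yields the exact value of $\det A_{n}(h)$ as a by-product, which is additional information (though not actually used elsewhere in the paper). Either proof is perfectly adequate for the lemma as stated.
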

Therefore, we can compute the coefficients $a_n,\ldots,a_{2n-1}$ from the matrix $A_n(h)$ and the vector $\mathbf{b}_n$.
\begin{equation*}
\left(\begin{array}{c}
a_{n}\\
a_{n+1}\\
\vdots\\
a_{2n-1}
\end{array}\right)=A_n(h)^{-1}\mathbf{b}_n(h).
\end{equation*}

On the other hand, by integrating by parts successively, we obtain
\begin{eqnarray}
\int_{0}^{h}|\xi^{(n)}(t)|^{2}\, dt & = & \int_{0}^{h}\xi^{(n)}(t){}^{2}\, dt\nonumber\\
 & = & \int_{0}^{h}\xi^{(n)}(t)\, d\xi^{(n-1)}(t)\nonumber\\
 & = & \xi^{(n)}(t)\xi^{(n-1)}(t)\Big\vert_{0}^{h}-\int_{0}^{h}\xi^{(n-1)}(t)\, d\xi^{(n)}(t)\nonumber\\
 & = & \xi^{(n)}(t)\xi^{(n-1)}(t)\Big\vert_{0}^{h}-\int_{0}^{h}\xi^{(n-1)}(t)\xi^{(n+1)}(t)\, dt\nonumber\\
 & = & \xi^{(n)}(t)\xi^{(n-1)}(t)\Big\vert_{0}^{h}-\int_{0}^{h}\xi^{(n+1)}(t)\, d\xi^{(n-2)}(t)\nonumber\\
 & = & \left(\xi^{(n)}(t)\xi^{(n-1)}(t)-\xi^{(n+1)}(t)\xi^{(n-2)}(t)\right)\Big\vert_{0}^{h}+\int_{0}^{h}\xi^{(n-2)}(t)\, d\xi^{(n+1)}(t)\nonumber\\
 & = & \left(\xi^{(n)}(t)\xi^{(n-1)}(t)-\xi^{(n+1)}(t)\xi^{(n-2)}(t)\right)\Big\vert_{0}^{h}+\int_{0}^{h}\xi^{(n-2)}(t)\xi^{(n+2)}(t)\, dt\nonumber\\
 & = & \cdots\nonumber\\
 & = & \sum_{i=0}^{n-1}(-1)^{i}\xi^{(n+i)}(t)\xi^{(n-1-i)}(t)\Big\vert_{0}^{h}+\int_{0}^{h}\xi(t)\xi^{(2n)}(t)\, dt\nonumber\\
 & = & \sum_{i=0}^{n-1}(-1)^{i}\xi^{(n+i)}(t)\xi^{(n-1-i)}(t)\Big\vert_{0}^{h},\label{eq: Ch via xi}
\end{eqnarray}
where we have used the fact that $\xi^{(2n)}(t)=0$ to obtain the last equality. Next we will compute the last expression using the relation \eqref{eq1_1}. 
It follows from \eqref{eq1_1} that 
\begin{eqnarray*}
\left(\begin{array}{c}
\xi^{(n)}(t)\\
\xi^{(n+1)}(t)\\
\vdots\\
\xi^{(n+k)}(t)\\
\vdots\\
\xi^{(2n-1)}(t)
\end{array}\right) & = & A_{n}^{(n)}(t)\left(\begin{array}{c}
a_{n}\\
a_{n+1}\\
\vdots\\
a_{k}\\
\vdots\\
a_{2n-1}
\end{array}\right)
\end{eqnarray*}
where $A_{n}^{(n)}(t)$ is the matrix obtained from $A_n(t)$ by taking $n^{\mathrm{th}}$-order derivative of each entry of $A_n(t)$. The $(k,i)^{\mathrm{th}}$-element, $i=1,\ldots, n, k=1,\ldots, n, i\geq k$, of $A_n^{(n)}(t)$ is given by
\begin{align*}
\left((k-1)!\begin{pmatrix}n+i-1\\
k-1
\end{pmatrix}t^{n+i-k}\right)^{(n)}= & (k-1)!\begin{pmatrix}n+i-1\\
k-1
\end{pmatrix}n!\begin{pmatrix}i-k+n\\
i-k
\end{pmatrix}t^{-k+i}\\
= & \frac{(n+i-1)!}{(i-k)!}t^{i-k}.
\end{align*}
Other elements of $A_n^{(n)}(t)$ are equal to 0. 
Therefore, we have
\begin{align*}
\left(\begin{array}{c}
\xi^{(n)}(t)\xi^{(n-1)}(t)\\
\xi^{(n+1)}(t)\xi^{(n-2)}(t)\\
\cdots\\
\xi^{(n+k)}(t)\xi^{(n-k-1)}(t)\\
\cdots\\
\xi^{(2n-1)}(t)\xi(t)
\end{array}\right)
=D_n(t)\left(\begin{array}{c}
a_{n}\\
a_{n+1}\\
\vdots\\
a_{k}\\
\vdots\\
a_{2n-1}
\end{array}\right),
\end{align*}
where the $(k,i)^{\mathrm{th}}$-element, $i=1,\ldots, n, k=1,\ldots, n, i\geq k$, of $D_n(t)$ is given by $\frac{(n+i-1)!}{(i-k)!}t^{i-k}\xi^{(n-k)}(t)$. Other elements of $D_n(t)$ are equal to 0.
It follows that
\begin{equation*}
\left(\begin{array}{c}
\xi^{(n)}(h)\xi^{(n-1)}(h)\\
\xi^{(n+1)}(h)\xi^{(n-2)}(h)\\
\vdots\\
\xi^{(n+k)}(h)\xi^{(n-k-1)}(h)\\
\vdots\\
\xi^{(2n-1)}(h)\xi(h)
\end{array}\right)-\left(\begin{array}{c}
\xi^{(n)}(0)\xi^{(n-1)}(0)\\
\xi^{(n+1)}(0)\xi^{(n-2)}(0)\\
\vdots\\
\xi^{(n+k)}(0)\xi^{(n-k-1)}(0)\\
\vdots\\
\xi^{(2n-1)}(0)\xi(0)
\end{array}\right)= E_{n}(h)[A_{n}(h)]^{-1}\mathbf{b}_n(h),
\end{equation*}
where the matrix $E_{n}(h)=D_n(h)-D_n(0)$ is given by
\[
E_{n}(h)[i_{1},i_{2}]=\begin{cases}
\frac{(n+i_{2})!}{(i_{2}-i_{1})!}\xi^{(n-i_{1}-1)}(h)h^{-i_{1}+i_{2}}, & i_{2}>i_{1},\\
\frac{(n+i_{2})!}{(i_{2}-i_{1})!}[\xi^{(n-i_{1}-1)}(h)-\xi^{(n-i_{1}-1)}(0)], & i_{2}=i_{1},\\
0, & i_{2}<i_{1},
\end{cases}
\]
for all $i_{1},i_{2}=0,\ldots,n$.
Therefore, substituting this back to \eqref{eq: Ch via xi}, we obtain
\begin{equation}
C_{h}=\int_{0}^{h}(\xi^{(n)}(t))^{2}dt=\left(\begin{array}{ccccc}
1 & -1 & 1 & -1 & \cdots\end{array}\right)E_{n}(h)[A_{n}(h)]^{-1}\mathbf{b}_n(h).\label{eq1-1}
\end{equation}
The right-hand side of \eqref{eq1-1} can be transformed further using the following lemma, whose proof is presented below.
\begin{lemma} 
\label{lem: Bn}
It holds that
\begin{eqnarray}
\left[\begin{array}{ccccc}
1 & -1 & 1 & -1 & \cdots\end{array}\right]E_{n}(h) & = & [\mathbf{b}_n(h)]^{T}B_{n}(h)\label{eq2},
\end{eqnarray}
where the matrix $B_{n}(h)$ is defined as follows
\begin{equation}
B_{n}(h)[i_{1},i_{2}]=\begin{cases}
(-1)^{n-i_{1}-1}\frac{(n+i_{2})!}{(i_{1}+i_{2}-n+1)!}h^{i_{2}+i_{1}-n+1}, & i_{2}+i_{1}\ge n-1,\\
0 & i_{2}+i_{1}<n-1,
\end{cases}\label{B}
\end{equation}
for all $i_{1},i_{2}=0,\ldots,n-1$.
\end{lemma}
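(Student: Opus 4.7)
My approach is to verify the matrix identity \eqref{eq2} column by column: I fix $i_2 \in \{0,\ldots,n-1\}$, compute the $i_2$-th entries of both sides as linear combinations of the derivatives $\xi^{(k)}(h)$ and $\xi^{(k)}(0)$, and check that the coefficients match. Since the identity must hold irrespective of the particular polynomial $\xi$, matching these coefficients suffices.

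Since $E_n(h)[i_1,i_2] = 0$ whenever $i_1 > i_2$, and the row vector on the left of \eqref{eq2} has entries $(-1)^{i_1}$, the $i_2$-th component of the left-hand side collapses to
\[
\mathrm{LHS}[i_2] = \sum_{i_1=0}^{i_2}(-1)^{i_1}\frac{(n+i_2)!}{(i_2-i_1)!}\,h^{i_2-i_1}\,\xi^{(n-1-i_1)}(h) \;-\; (-1)^{i_2}(n+i_2)!\,\xi^{(n-1-i_2)}(0),
\]
the boundary contribution at $0$ coming solely from the diagonal entry $i_1=i_2$ of $E_n(h)$.

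For the right-hand side, I substitute the explicit formula \eqref{b} for $\mathbf{b}_n(h)[i_1]$ and use the change of index $k = n-1-i_1$, which, together with the support condition $i_1 + i_2 \ge n-1$ in \eqref{B}, turns the sum into one over $k \in \{0,\ldots,i_2\}$. The contributions involving $\xi^{(n-1-k)}(h)$ then match the first sum in $\mathrm{LHS}[i_2]$ term by term. For the contributions involving $\xi^{(j)}(0)$, I obtain a double sum over $k$ (outer, from the row vector) and $\ell$ (inner, from the expansion of $\mathbf{b}_n$); after swapping the order of summation, the inner sum in the fixed variable takes the form $\sum_{p=0}^{N}(-1)^p \binom{N}{p}$, which vanishes for $N \ge 1$ by the elementary alternating binomial identity and equals $1$ when $N=0$.

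The crux of the argument, and the step I expect to be most delicate, is precisely this combinatorial collapse: only the single surviving term (the one with $N=0$) contributes, and it must produce exactly the required boundary value $-(-1)^{i_2}(n+i_2)!\,\xi^{(n-1-i_2)}(0)$. Once the bookkeeping is done carefully with the double change of variables and the swap of summation order, the non-surviving terms cancel in blocks, and the identity \eqref{eq2} follows.
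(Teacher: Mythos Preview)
Your proposal is correct and follows essentially the same approach as the paper's own proof: both arguments compare the two sides entry by entry, use the change of index $k = n-1-i_1$ to match the $\xi^{(\cdot)}(h)$ contributions term by term, and then handle the $\xi^{(\cdot)}(0)$ contributions by swapping the order of summation and invoking the alternating binomial identity $\sum_{p=0}^{N}(-1)^p\binom{N}{p}=(1-1)^N=0$ so that only the diagonal term survives. The only cosmetic difference is that the paper organises the computation by transforming the right-hand side into two pieces $(I)$ and $(II)$ and simplifying each, whereas you compute both sides separately and match coefficients; the underlying combinatorics is identical.
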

Substituting \eqref{eq2} into \eqref{eq1-1}, we obtain 
\begin{equation*}
C_{h}=\int_{0}^{h}(\xi^{(n)}(t))^{2}dt=[\mathbf{b}_n(h)]^{T}B_{n}(h)[A_{n}(h)]^{-1}\mathbf{b}_n(h),
\end{equation*}
with $\mathbf{b}_n(h),  A_n(h)$ and $B_n(h)$ defined in \eqref{b}, \eqref{A} and \eqref{B} respectively. This establishes the statement of Theorem \ref{theo: explicit formula}. 
\end{proof}

For completion, we now prove Lemma \ref{lem: detA} and Lemma \ref{lem: Bn}.

\begin{proof}\textit{\textbf{of Lemma \ref{lem: detA}}}. We recall that $A_{n}(h)$ can be written in terms of the Wronskian
\begin{equation*}
A_{n}(h)=W(h^n,\ldots,h^{2n-1}).
\end{equation*}
Therefore 
\begin{equation*}
\det A_n(h)=\det W(f_{1},\ldots,f_{n}).
\end{equation*}
According to \cite[Lemma 1]{BostanDumas2010}, we have 
\begin{equation*}
\det A_n(h)=V(n,\ldots,2n-1)h^{\footnotesize{\sum\limits _{i=n}^{2n-1}i-\begin{pmatrix}n\\
2
\end{pmatrix}}}=V(n,\ldots,2n-1)h^{\frac{n(3n-1)}{2}},
\end{equation*}
where $V(n,\ldots,2n-1)$ is the Vandermonde determinant 
\begin{equation*}
V(n,\ldots,2n-1)=\left|\begin{array}{ccc}
1 & \ldots & 1\\
n & \ldots & 2n-1\\
\vdots & \vdots & \vdots\\
n^{n-1} & \ldots & (2n-1)^{n-1}
\end{array}\right|=\prod\limits _{1\leq i<j\leq n}(j-i).
\end{equation*}
Hence, we obtain
\begin{equation}
\det A_n(h)=h^{\frac{n(3n-1)}{2}}\prod\limits _{1\leq i<j\leq n}(j-i),
\end{equation}
which is non-zero.
This completes the proof of the lemma.
\end{proof} 

\begin{proof}\textit{\textbf{of Lemma \ref{lem: Bn}}}. The equality \eqref{eq2} is interesting on its own. Below we will prove it using purely combinatorial techniques.

The $k^{\mathrm{th}}$ element on the left hand side of \eqref{eq2} is
\begin{align*}
\sum_{i=0}^{n-1}(-1)^{i}E_{n}(h)[i,k]=(n+k)!(-1)^{k}(\xi^{(n-k-1)}(h)-\xi^{(n-k-1)}(0))+\sum_{i=0}^{k-1}(-1)^{i}\frac{(n+k)!}{(k-i)!}\xi^{(n-i-1)}(h)h^{-i+k}.
\end{align*}
The $(i_{1},i_{2})$ element of $B_{n}(h)$ is 
\[
B_{n}(h)[i_{1},i_{2}]=\begin{cases}
(-1)^{n-i_{1}-1}\frac{(n+i_{2})!}{(i_{1}+i_{2}-n+1)!}h^{i_{2}+i_{1}-n+1}, & i_{2}+i_{1}\ge n-1,\\
0 & i_{2}+i_{1}<n-1.
\end{cases}
\]

So that the $k^\mathrm{th}$ element on the right hand side of \eqref{eq2} is 
\begin{eqnarray*}
\sum_{i=0}^{n-1}\mathbf{b}_n(i)B_{n}[i,k] & = & \sum_{i=0,i+k\ge n-1}^{n-1}\mathbf{b}_n(i)(-1)^{n-i-1}\frac{(n+k)!}{(k+i-n+1)!}h^{k+i-n+1}\\
 & = & \sum_{i=n-1-k}^{n-1}\mathbf{b}_n(i)(-1)^{n-i-1}\frac{(n+k)!}{(k+i-n+1)!}h^{k+i-n+1}\\
 & = & \sum_{i=n-1-k}^{n-1}[\xi^{(i)}(h)-\sum_{j=i}^{n-1}\frac{1}{(j-i)!}h^{j-i}\xi^{(j)}(0)](-1)^{n-i-1}\frac{(n+k)!}{(k+i-n+1)!}h^{k+i-n+1}.
\end{eqnarray*}

To establish \eqref{eq2}, we need to show that
\begin{align}
&\sum_{i=n-1-k}^{n-1}[\xi^{(i)}(h)-\sum_{j=i}^{n-1}\frac{1}{(j-i)!}h^{j-i}\xi^{(j)}(0)](-1)^{n-i-1}\frac{(n+k)!}{(k+i-n+1)!}h^{k+i-n+1}\nonumber
\\&=(n+k)!(-1)^{k}(\xi^{(n-k-1)}(h)-\xi^{(n-k-1)}(0))+\sum_{i=0}^{k-1}(-1)^{i}\frac{(n+k)!}{(k-i)!}\xi^{(n-i-1)}(h)h^{-i+k}.\label{eq: equality}
\end{align}
We will show this by transforming the left-hand side . First we write it as follows
\begin{align*}
&\sum_{i=n-1-k}^{n-1}[\xi^{(i)}(h)-\sum_{j=i}^{n-1}\frac{1}{(j-i)!}h^{j-i}\xi^{(j)}(0)](-1)^{n-i-1}\frac{(n+k)!}{(k+i-n+1)!}h^{k+i-n+1}
\\&=\sum_{i=n-1-k}^{n-1}\xi^{(i)}(h)(-1)^{n-i-1}\frac{1}{(k+i-n+1)!}h^{k+i-n+1}
\\&\qquad -\sum_{i=n-1-k}^{n-1}\sum_{j=i}^{n-1}\frac{1}{(j-i)!}h^{j-i}\xi^{(j)}(0)(-1)^{n-i-1}\frac{1}{(k+i-n+1)!}h^{k+i-n+1} 
\\&=(I)-(II).
\end{align*}
Now we transform further $(I)$ and $(II)$. We have
\begin{align}
(I)&:=\sum_{i=n-1-k}^{n-1}\xi^{(i)}(h)(-1)^{n-i-1}\frac{1}{(k+i-n+1)!}h^{k+i-n+1}\nonumber\\
&=\sum_{i=n-1-k}^{n-1}\xi^{(i)}(h)(-1)^{n-i-1}\frac{1}{(k+i-n+1)!}h^{k+i-n+1}\text{ (now we change variable: }i=-j+n-1)\nonumber\\
&=\sum_{i=k}^{0}\xi^{(n-j-1)}(h)(-1)^{j}\frac{1}{(k-j)!}h^{k-j}\nonumber\\
&=\xi^{(n-k-1)}(h)(-1)^{k}+\sum_{i=0}^{k-1}\xi^{(n-j-1)}(h)(-1)^{j}\frac{1}{(k-j)!}h^{k-j}.\label{eq:equality2}
\end{align}
The term $(II)$ can be transformed as follows.
\begin{align}
(II)&:=-\sum_{i=n-1-k}^{n-1}\sum_{j=i}^{n-1}\frac{1}{(j-i)!}h^{j-i}\xi^{(j)}(0)(-1)^{n-i-1}\frac{1}{(k+i-n+1)!}h^{k+i-n+1} \nonumber
\\&=\sum_{i=n-1-k}^{n-1}\sum_{j=i}^{n-1}\frac{1}{(j-i)!}h^{j-i}\xi^{(j)}(0)(-1)^{n-i}\frac{1}{(k+i-n+1)!}h^{k+i-n+1}\nonumber
\\&=\sum_{j=n-1-k}^{n-1}\xi^{(j)}(0)h^{j+k-n+1}\sum_{i=n-1-k}^{j}\frac{1}{(j-i)!}(-1)^{n-i}\frac{1}{(k+i-n+1)!}(\text{ for }j=n-1-k..n-1)\nonumber
\\&=-\xi^{(n-k-1)}(0)(-1)^k+\sum_{j=n-k}^{n-1}\xi^{(j)}(0)h^{j+k-n+1}\sum_{i=n-1-k}^{j}\frac{1}{(j-i)!}(-1)^{n-i}\frac{1}{(k+i-n+1)!}.
\label{eq: equality3}
\end{align}

We now show that the second term of \eqref{eq: equality3} vanishes by showing that
\begin{equation*}
\sum_{i=n-1-k}^{j}\frac{1}{(j-i)!}(-1)^{n-i}\frac{1}{(k+i-n+1)!}=0
\end{equation*}
for each $j=n-k,\ldots, n-1$. In fact, we have
\begin{align*}
&\sum_{i=n-1-k}^{j}(-1)^{n-i}\frac{1}{(j-i)!(k+i-n+1)!}\\
&\qquad=\frac{1}{(j+k-n+1)!}\sum_{i=n-1-k}^{j}(-1)^{n-i}\frac{(j+k-n+1)!}{(j-i)!(k+i-n+1)!}\\
&\qquad=\frac{(-1)^{n-j}}{(j+k-n+1)!}\sum_{i=n-1-k}^{j}(-1)^{j-i}\begin{pmatrix}j+k-n+1\\
j-i
\end{pmatrix}\\
&\qquad=\frac{(-1)^{n-j}}{(j+k-n+1)!}\sum_{i=j-(n-1-k)}^{0}(-1)^{l}\begin{pmatrix}j+k-n+1\\
l
\end{pmatrix}\text{(now we change variable: }l=j-i,i=j-l)\\
&\qquad=\frac{(-1)^{n-j}}{(j+k-n+1)!}\sum_{l=0}^{j-n+1+k}(-1)^{l}\begin{pmatrix}j+k-n+1\\
l
\end{pmatrix}\\
&\qquad=\frac{(-1)^{n-j}}{(j+k-n+1)!}(-1+1)^{j-n+k+1}\\
\\&\qquad=0.
\end{align*}
As a consequence, $(II)=-\xi^{(n-k-1)}(0)(-1)^{k}$. Therefore, the left-hand side of \eqref{eq: equality} is equal to 
\begin{equation*}
(\xi^{(n-k-1)}(h)-\xi^{(n-k-1)}(0))(-1)^{k}+\sum_{i=0}^{k-1}\xi^{(n-j-1)}(h)(-1)^{j}\frac{1}{(k-j)!}h^{k-j},
\end{equation*}
which is exactly its right-hand side. This finishes the proof of Lemma \ref{lem: Bn}.
\end{proof}
\begin{remark}
We recall that the cost function $C_{n,h}$ is given by
\begin{equation}
\label{eq: final formula of C}
C_{n,h}=\mathbf{b}_n(h)^TB_n(h)A_n^{-1}(h)\mathbf{b}_n(h).
\end{equation}
We can show that the matrix $B_n(h)A_n^{-1}(h)$ is positive definite. Indeed, let $z\in\R^{dn}$ be arbitrary. Then by definition of $C_{n,h}$ and from \eqref{eq: final formula of C}, we have
\begin{equation*}
z^TB_n(h)A_n^{-1}(h)z=C_{n,h}(0,z).
\end{equation*}
Then the positive definiteness of $B_n(h)A_n^{-1}(h)$ follows from Lemma \ref{lem: C=0}. Note further that we have the following property, for any matrix $M$ then
\begin{equation}
z^TMz=(z^TMz)^T=z^TM^Tz=z^T\frac{M+M^T}{2}z.
\end{equation}
Therefore we can write
\begin{equation*}
C_{n,h}=\mathbf{b}_n(h)^TH_n(h)\mathbf{b}_n(h),
\end{equation*}
where $H_n(h)=\frac{1}{2}[B_n(h)A_n^{-1}(h)+(B_n(h)A_n^{-1}(h))^T]$ is a symmetric positive matrix. 
\end{remark}
\begin{remark}[Alternative computations] We can compute the cost function $C_{n,h}$ more directly as follows.
\begin{align*}
\int_0^h|\xi^{(n)}(t)|^2\,dt&=\int_0^h\left(\sum_{i=n}^{2n-1}n!\begin{pmatrix}
i\\n
\end{pmatrix}a_it^{i-n}\right)^2\,dt
\\&=(n!)^2\sum_{n\leq i,j\leq 2n-1}\begin{pmatrix}
i\\n
\end{pmatrix}\begin{pmatrix}
j\\n
\end{pmatrix}a_ia_j\int_0^h t^{i+j-2n}\,dt
\\&=(n!)^2\sum_{n\leq i,j\leq 2n-1}\begin{pmatrix}
i\\n
\end{pmatrix}\begin{pmatrix}
j\\n
\end{pmatrix}a_ia_j \frac{h^{i+j+1-2n}}{i+j+1-2n}.
\end{align*}
Define the matrix $K_n(h)=(K_{ij})_{i,j=n}^{2n-1}$ with entries
\begin{equation*}
K_{ij}=(n!)^2\begin{pmatrix}
i\\n
\end{pmatrix}\begin{pmatrix}
j\\n
\end{pmatrix}\frac{h^{i+j+1-2n}}{i+j+1-2n}.
\end{equation*}
Clearly $K_n(h)$ is symmetric.
Then $C_{n,h}$ can be written as
\begin{equation*}
C_{n,h}=a^TK_n(h)a=\mathbf{b}_n^T(A_{n}(h)^{-1})^T K_n(h) A_n(h)^{-1}\mathbf{b}_n(h),
\end{equation*}
where $a=(a_n,\ldots,a_{2n-1})^T$ and $\mathbf{b}_n(h)$  and $A_n(h)$ are defined in \eqref{B} and \eqref{A} respectively.

The advantage of the formula derived in the previous section is that it involves $A_n^{-1}$ only one time and the matrix $B_n$ is triangular.
\end{remark}
\begin{remark}[Alternative representation using the scaling property] Using the scaling property in Theorem \ref{theo: qual theo 2}, the cost function can be written as follows
\begin{equation*}
C_{n,h}=\,h^{1-2n}\,[\tilde{\mathbf{b}}_n(h)]^TB_n(1)[A_n(1)]^{-1}\tilde{\mathbf{b}}_n(h),
\end{equation*}
where 
\begin{equation*}
\tilde{\mathbf{b}}_n(h)=\begin{pmatrix}
\tilde{\xi}(1)\\
h\tilde{\xi}'(1)\\
\vdots\\
h^k\tilde{\xi}^{(k)}(1)\\
\vdots\\
h^{n-1}\tilde{\xi}^{(n-1)}(1)
\end{pmatrix}-V_n(1)\begin{pmatrix}
\tilde{\xi}(0)\\
h\tilde{\xi}'(0)\\
\vdots\\
h^k\tilde{\xi}^{(k)}(0)\\
\vdots\\
h^{n-1}\tilde{\xi}^{(n-1)}(0)
\end{pmatrix}.
\end{equation*}
By analogous computations to \eqref{b}, the $i$-component of this vector is
\begin{equation*}
\tilde{\mathbf{b}}_n(h)[i]=y_ih^i-\sum_{j=i}^{n-1}\frac{1}{(j-i)!}h^{j}x_j=h^i\mathbf{b}_n(h)[i].
\end{equation*}
\end{remark}
\section{ LU decomposition of $A_{n}$ and $A_n^{-1}$}
\label{sec: LU} The analytical formulas for the cost functions obtained
in the previous section involve the inverse of the matrix $A_{n}$,
which is a matrix of order $n$. In this section, we provide an explicit formula for the $LU$ decomposition of $A_n$ and for $A_n^{-1}$. For simplicity of notation, we leave out the dependence on $h$ of $A_n$ (and hence $L, U$). We recall that $A=W(h^{n},\ldots,h^{2n-1})$ is the Wronskian matrix associated to the polynomials $\{h^n,\ldots,h^{2n-1}\}$, therefore the analysis of this section is of independent interest since the Wronskian matrix plays an important role in linear algebra and differential equations.

The main result of this section is the following theorem, which is summarised as Theorem \ref{theo: LU of A} in the introduction. 
\begin{theorem}
\label{theo: LU of A 2}
\begin{enumerate}
\item  $A_{n}=LU$ where $U$ and $L$ are defined as follows
\begin{equation}
\label{U}
U[i,j]= \begin{cases}
\frac{(j-1)!}{(j-i)!}h^{j+n-i} & \text{ if }j\ge i,\\
0 & \text{otherwise},
\end{cases} 
\end{equation}
and 
\begin{equation}
\label{L}
L[k,j]=\begin{cases}
h^{j-k}\begin{pmatrix}k-1\\
j-1
\end{pmatrix}\frac{n!}{(n-k+j)!} & \text{ if }j\le k,\\
0 & \text{otherwise}.
\end{cases}
\end{equation}
\item The inverse of $A_n$ is given by the product of the following two matrices:
\begin{equation}
U^{-1}[i,j]=\begin{cases}
\frac{(-1)^{i+j}}{((i-1)!(j-i)!)h^{-j+i+n}} & \text{ if }j\ge i,\\
0 & \text{otherwise},
\end{cases}\label{heq3}
\end{equation}
and
\begin{align}
L^{-1}[j,i]&=\begin{cases}
(-1)^{j-i}h^{i-j}\frac{(j-1)!}{(j-i)!(i-1)!}\frac{(n+j-i-1)!}{(n-1)!} & \text{ if }j\ge i,\\
0 & \text{otherwise},
\end{cases}\label{eq21}
\\&=\begin{cases}
(-1)^{j-i}h^{i-j}\frac{(j-1)!}{(i-1)!}\begin{pmatrix}
n+j-i-1\\
j-i
\end{pmatrix}\nonumber& \text{ if }j\ge i\\
0 & \text{otherwise}.
\end{cases}
\end{align}
\end{enumerate}
\end{theorem}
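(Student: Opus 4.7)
The strategy is pure verification: the four formulas for $L$, $U$, $L^{-1}$, $U^{-1}$ are explicit, so I would check each of $A_n = LU$, $UU^{-1} = I$, $LL^{-1} = I$ by direct multiplication. Each product reduces, after collapsing the powers of $h$ and rearranging factorials, to a classical combinatorial identity. The decomposition $A_n^{-1} = U^{-1} L^{-1}$ then follows automatically.

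For the identity $A_n = LU$, I would expand $(LU)[k,i] = \sum_j L[k,j] U[j,i]$. Since $L$ is lower triangular and $U$ is upper triangular in the prescribed sense, the summation index runs over $j = 1, \ldots, \min(k,i)$. Substituting the explicit formulas, all the powers of $h$ combine to $h^{n+i-k}$, and the remaining combinatorial sum becomes (after setting $l = j-1$) a multiple of
\[
\sum_{l=0}^{\min(k-1,i-1)} \binom{n}{k-1-l}\binom{i-1}{l}.
\]
The Vandermonde convolution identifies this with $\binom{n+i-1}{k-1}$, giving exactly $A_n[k,i] = (k-1)!\binom{n+i-1}{k-1} h^{n+i-k}$.

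The verification $UU^{-1} = I$ is the easiest: the supports force $k \le j \le i$, the diagonal case is immediate, and after cancellations the off-diagonal sum becomes $(1-1)^{i-k}/(i-k)!$ (up to sign and a power of $h$), which vanishes for $k < i$. For $LL^{-1} = I$, the supports force $i \le j \le k$; again the diagonal case works out at once, while the off-diagonal case (setting $m = k-i > 0$ and $l = j - i$) reduces up to an explicit nonzero prefactor to
\[
\sum_{l=0}^{m}(-1)^l \binom{m}{l}\binom{n+l-1}{m-1}.
\]
This is the $m$-th finite difference at $0$ of the polynomial $P(l) = \binom{n+l-1}{m-1}$, which has degree $m-1$ in $l$; hence the sum is zero.

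The main obstacle will be Step~3: one has to spot that the alternating sum is a finite difference of a polynomial whose degree is strictly less than the order of the difference, which is not immediately apparent from the raw factorial expression and requires the rewriting $\frac{(n+l-1)!}{(n-m+l)!} = (m-1)!\binom{n+l-1}{m-1}$ to expose the polynomial structure. Everything else is bookkeeping with binomial coefficients; once the three identities are in place, the theorem follows, and $A_n^{-1}$ is obtained by multiplying the two explicit triangular inverses.
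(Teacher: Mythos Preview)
Your proposal is correct and uses a somewhat different route from the paper. The paper first verifies $UU^{-1}=I$ just as you do, but then \emph{derives} $L$ by computing $A_nU^{-1}$ and simplifying the resulting sum via a generating-function device: it writes the sum as $f^{(k-1)}(1)$ for $f(x)=x^n(1-x)^{j-1}$ and evaluates this derivative once by expanding $(1-x)^{j-1}$ binomially and once by the Leibniz rule. For $LL^{-1}=I$ it repeats the same trick with $g(x)=x^{n-1}(x-1)^{k-i}$, observing that $g^{(k-i-1)}(1)=0$ because every Leibniz term still carries a factor $(x-1)$. By contrast, you verify $LU=A_n$ directly via Vandermonde convolution and handle $LL^{-1}=I$ by the finite-difference principle that $\sum_{l}(-1)^l\binom{m}{l}P(l)=0$ for any polynomial $P$ of degree $<m$. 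Both arguments are short; yours is a bit more transparently combinatorial (Vandermonde and finite differences are textbook identities), while the paper's generating-function approach has the mild advantage that it discovers $L$ rather than merely checking it. The two methods are essentially dual: evaluating $\bigl[x^a(1-x)^b\bigr]^{(c)}$ at $x=1$ is one of the standard proofs of the finite-difference vanishing you invoke.
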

\begin{proof}
We first prove the first statement about the $LU$ decomposition of the matrix $A_n$
We will show that $U^{-1}$ can be computed as follows:
\begin{equation*}
U^{-1}[i,j]=\begin{cases}
\frac{1}{((i-1)!(j-i)!)h^{-j+i+n}} & \text{ if }j\ge i,\\
0 & \text{otherwise}.
\end{cases}
\end{equation*}
In fact, we have 
\begin{eqnarray*}
\sum_{j}U[i,j]U^{-1}(j,k) & = & \sum_{j\ge i,k\ge j}U[i,j]U^{-1}[j,k]\\
 & = & \sum_{j=i}^{k}U[i,j]U^{-1}[j,k]\\
 & = & \sum_{j=i}^{k}\frac{(j-1)!}{(j-i)!}h^{j+n-i}\frac{(-1)^{j+k}}{((j-1)!(k-j)!)h^{n-k+j}}\\
 & = & h^{k-i}(-1)^{k}\sum_{j=i}^{k}\frac{1}{(j-i)!}\frac{(-1)^{j}}{(k-j)!}.
\end{eqnarray*}
If $k=i$, then $\sum\limits_{j}U[i,j]U^{-1}[j,k]=(-1)^{k}\frac{1}{(k-i)!}\frac{(-1)^{k}}{(k-k)!}=1$.
\\ \ \\
If $k\ne i$, then we have
\begin{eqnarray*}
\sum_{j=i}^{k}\frac{1}{(j-i)!}\frac{(-1)^{j}}{(k-j)!} & = & \frac{1}{(k-i)!}\sum_{j=i}^{k}\frac{(k-i)!}{(j-i)!(k-j)!}(-1)^{j}\\
 & = & \frac{1}{(k-i)!}(-1)^{i}\sum_{l=0}^{k-i}\frac{(k-i)!}{l!(k-i-l)!}(-1)^{l+i}\text{ (change variable: }l=j-i)\\
 & = & \frac{1}{(k-i)!}(-1)^{i}(1+(-1))^{k-i}\\
 & = & 0.
\end{eqnarray*}
Therefore, 
\[
\sum_{j}U[i,j]U^{-1}[j,k]=\begin{cases}
1 & \text{ if }k=i\\
0 & \text{ if }k\ne i.
\end{cases}
\]
In other words, $U^{-1}$ can be defined as in Eq. (\ref{heq3}).

Now we establish the formula for $L$. By definition $A=LU$, so that $L[k,j]$ can be written as follows
\begin{eqnarray*}
L[k,j]=\sum_{i\le j}A_{n}[k,i]U^{-1}[i,j] & = & h^{j-k}\sum_{i=1}^{j}\frac{(n+i-1)!}{(n+i-k)!}\frac{(-1)^{i+j}}{((i-1)!(j-i)!)}.
\end{eqnarray*}

We now simplify the expression above. Consider the function $f(x)=x^{n}(1-x)^{j-1}$. On the one hand, we have 
\begin{align*}
f(x) & =x^{n}\sum_{i=0}^{j-1}\begin{pmatrix}j-1\\
i
\end{pmatrix}(-1)^{i}x^{i}\\
 & =\sum_{i=0}^{j-1}\begin{pmatrix}j-1\\
i
\end{pmatrix}(-1)^{i}x^{n+i}\\
 & =\sum_{i=1}^{j}\begin{pmatrix}j-1\\
i-1
\end{pmatrix}(-1)^{i-1}x^{n+i-1}.
\end{align*}
Therefore,
\begin{align*}
f(x)^{(k-1)} & =\sum_{i=1}^{j}\begin{pmatrix}j-1\\
i-1
\end{pmatrix}(-1)^{i-1}\frac{(n+i-1)!}{(n+i-1-(k-1))!}x^{n+i-1-(k-1)}\\
 & =\sum_{i=1}^{j}\frac{(j-1)!}{(j-i)!(i-1)!}(-1)^{i-1}\frac{(n+i-1)!}{(n+i-k)!}x^{n+i-1-(k-1)}.
\end{align*} 
It follows that
\begin{equation*}
\sum_{i=1}^{j}\frac{(-1)^{i}}{(i-1)!(j-i)!}\frac{(n+i-1)!}{(n+i-k)!}=\frac{-1}{(j-1)!}f(1)^{(k-1)},
\end{equation*}
and by multiplying both sides of this equality with $h^{j-k}$, we get
\begin{align*}
h^{j-k}\sum_{i=1}^{j}\frac{(-1)^{i+j}}{(i-1)!(j-i)!}\frac{(n+i-1)!}{(n+i-k)!} & =h^{j-k}\frac{(-1)^{j+1}}{(j-1)!}f(1)^{(k-1)}.
\end{align*}
On the other hand, according to Leibniz formula, we have
\begin{align*}
f(x)^{(k-1)} & =[x^{n}(1-x)^{j-1}]^{(k-1)}\\
 & =\sum_{i=0}^{k-1}\begin{pmatrix}k-1\\
i
\end{pmatrix}[x^{n}]^{(k-1-i)}[(1-x)^{j-1}]^{(i)}.
\end{align*}
If $k-1<j-1$, then $f(1)^{(k-1)}=0$, which implies that $\sum_{i\le j}A_{n}[k,i]U^{-1}[i,j]=0$.
\\ \ \\\
If $k\ge j$, then we have
\begin{align*}
f(x)^{(k-1)}|_{x=1} & =\sum_{i=0}^{k-1}\begin{pmatrix}k-1\\
i
\end{pmatrix}[x^{n}]^{(k-1-i)}[(1-x)^{j-1}]^{(i)}|_{x=1}\\
 & =\sum_{i=0}^{j-1}\begin{pmatrix}k-1\\
i
\end{pmatrix}[x^{n}]^{(k-1-i)}[(1-x)^{j-1}]^{(i)}|_{x=1}\\
 & +\sum_{i=j}^{k-1}\begin{pmatrix}k-1\\
i
\end{pmatrix}[x^{n}]^{(k-1-i)}[(1-x)^{j-1}]^{(i)}|_{x=1}\\
 & =\begin{pmatrix}k-1\\
j-1
\end{pmatrix}[x^{n}]^{(k-1-(j-1))}(-1)^{j-1}[(1-x)^{j-1}]^{(j-1)}|_{x=1}\\
 & \qquad+0\\
 & =\begin{pmatrix}k-1\\
j-1
\end{pmatrix}\frac{n!}{(n-(k-j)!)}x^{n-(k-j)}(j-1)!(-1)^{j-1}|_{x=1}\\
 & =\begin{pmatrix}k-1\\
j-1
\end{pmatrix}\frac{n!}{(n-(k-j))!}(j-1)!(-1)^{j-1}.
\end{align*}
Therefore, 
\begin{align*}
h^{j-k}\sum_{i=1}^{j}\frac{(-1)^{i+j}}{(i-1)!(j-i)!}\frac{(n+i-1)!}{(n+i-k)!} & =h^{j-k}(-1)^{j+1}(-1)^{j-1}\begin{pmatrix}k-1\\
j-1
\end{pmatrix}\frac{n!}{(n-(k-j))!}.
\end{align*}
It then follows that 
\begin{align*}
L(k,j)=\sum_{i\le j}A_{n}(k,i)U^{-1}(i,j) & =h^{j-k}\begin{pmatrix}k-1\\
j-1
\end{pmatrix}\frac{n!}{(n-k+j)!},
\end{align*}
which is the desired formula. This completes the proof of the first statement of the theorem.

Now we will prove the second statement of the theorem. We will show that $L^{-1}$ has the form as defined in (\ref{eq21}).

The element $(k,i)$ of the product of $L$ and $L^{-1}$ is given by $\sum\limits_{j=1}^{n}L[k,j]L^{-1}[j,i]$.

If $k<i$, then
\begin{equation}
\label{eq: L-1.1}
\sum_{j=1}^{n}L[k,j]L^{-1}[j,i]=\sum_{j=1}^{k}L[k,j]L^{-1}[j,i]=0.
\end{equation}
If $k>i$, then we have
\begin{align*}
\sum_{j=1}^{n}L[k,j]L^{-1}[j,i] & =\sum_{j=1,j\ge i,j\le k}^{n}L[k,j]L^{-1}[j,i]\\
 & =\sum_{j=i}^{k}h^{j-k}\begin{pmatrix}k-1\\
j-1
\end{pmatrix}\frac{n!}{(n-k+j)!}(-1)^{j-i}h^{i-j}\frac{(j-1)!}{(j-i)!(i-1)!}\frac{(n+j-i-1)!}{(n-1)!}\\
 & =h^{i-k}\sum_{j=i}^{k}\frac{(k-1)!}{(k-j)!}\frac{n!}{(n-k+j)!}(-1)^{j-i}\frac{1}{(j-i)!(i-1)!}\frac{(n+j-i-1)!}{(n-1)!}\\
 & =h^{i-k}\frac{(k-1)!n!}{(k-i)!(i-1)!(n-1)!}\sum_{j=0}^{k-i}\frac{(k-i)!}{(k-i-j)!j!}\frac{(n+j-1)!}{(n-k+i+j)!}(-1)^{j},
\end{align*}
where to obtain the last equality we have changed variable $j:=j+i$. Next, we will show that the summation in the above expression is equal to $0$.

Let $\alpha=k-i$ and consider $g(x)=x^{n-1}(x-1)^{\alpha}$.
On the on hand, we have
\begin{equation*}
g(x)=\sum_{j=0}^{\alpha}\begin{pmatrix}\alpha\\
j
\end{pmatrix}x^{n+j-1}(-1)^{\alpha-j},
\end{equation*}
which follows that
\begin{align*}
g^{(\alpha-1)}(x) & =\sum_{j=0}^{\alpha}\begin{pmatrix}\alpha\\
j
\end{pmatrix}[x^{n+j-1}]^{(\alpha-1)}(-1)^{\alpha-j}\\
 & =\sum_{j=0}^{\alpha}\begin{pmatrix}\alpha\\
j
\end{pmatrix}\frac{(n+j-1)!}{(n+j-1-(\alpha-1))!}x^{n+j-\alpha}(-1)^{\alpha-j}\\
 & =\sum_{j=0}^{\alpha}\begin{pmatrix}\alpha\\
j
\end{pmatrix}\frac{(n+j-1)!}{(n+j-\alpha)!}x^{n+j-\alpha}(-1)^{\alpha-j}\\
 & =(-1)^{\alpha}\sum_{j=0}^{k-i}\frac{(k-i)!}{(k-i-j)!j!}\frac{(n+j-1)!}{(n+j-k+i)!}x^{n+j-\alpha}(-1)^{j}.
\end{align*}
In particular, we obtain
\begin{equation*}
g^{(\alpha-1)}(1)=(-1)^{\alpha}\sum_{j=0}^{k-i}\frac{(k-i)!}{(k-i-j)!j!}\frac{(n+j-1)!}{(n+j-k+i)!}(-1)^{j}.
\end{equation*}
On the other hand, we have
\begin{align*}
g^{(\alpha-1)}(x) & =[x^{n-1}(x-1)^{\alpha}]^{(\alpha-1)}\\
 & =\sum_{j=0}^{\alpha-1}[x^{n-1}]^{(\alpha-1-j)}[(x-1)^{\alpha}]^{(j)},
\end{align*}
so $g^{(\alpha-1)}(1)=0$.

Therefore, 
\begin{equation*}
(-1)^{\alpha}\sum_{j=0}^{k-i}\frac{(k-i)!}{(k-i-j)!j!}\frac{(n+j-1)!}{(n+j-k+i)!}(-1)^{j}=0,
\end{equation*}
and hence we obtain for $k>i$,
\begin{equation}
\label{eq: L^-1.2}
\sum_{j=1}^{n}L[k,j]L^{-1}[j,i]=0.
\end{equation}
Finally when $k=i$, we have
\begin{align}
\sum_{j=1}^{n}L[k,j]L^{-1}[j,i] & =\sum_{j=1}^{i}L[k,j]L^{-1}[j,k]+\sum_{j=i+1}^{n}L[k,j]L^{-1}[j,k]\nonumber\\
 & =\sum_{j=1}^{k}L[k,j]L^{-1}[j,k]\nonumber\\
 & =L[k,k]L^{-1}[k,k]\nonumber\\
 & =1,\label{eq: L^-1.3}
\end{align}
where in the last step we have used that  $L[k,k]=L^{-1}[k,k]=1$.
From \eqref{eq: L-1.1}, \eqref{eq: L^-1.2}, and \eqref{eq: L^-1.3}, we conclude that \eqref{eq21} is indeed the formula for the inverse of $L$.

This finishes the proof of the theorem.
\end{proof} 
\section{Numerical investigations}
\label{sec: simulations}
In this section, we provide an algorithm to compute $C_{n,h}$ using results from the previous section and compute the expressions obtained for small $n$ explicitly.
\begin{algorithm}
\label{al: alg}
Algorithm to compute $C_{n,h}(\mathbf{x}, \mathbf{y})$:

Input: $n,h, \mathbf{x}=(x_0,\ldots,x_{n-1})\in \R^{dn}$ and $\mathbf{y}=(y_0,\ldots,y_{n-1})\in \R^{dn}$.

Output: $C_{n,h}(\mathbf{x}, \mathbf{y})$.

The algorithm consists of $5$ steps.

Step 1: Compute $B_{n}(h)$

\begin{equation*}
B_{n}(h)[i_{1},i_{2}]=\begin{cases}
(-1)^{n-i_{1}-1}\frac{(n+i_{2})!}{(i_{1}+i_{2}-n+1)!}h^{i_{2}+i_{1}-n+1}, & i_{2}+i_{1}\ge n-1,\\
0 & i_{2}+i_{1}<n-1.
\end{cases}
\end{equation*}

Step 2: Compute $\mathbf{b}_{n}(h)$
\[
\mathbf{b}_{n}(h)[i]=y_{i}-\sum_{j=i}^{n-1}\frac{1}{(j-i)!}h^{j-i}x_{j}.
\]

Step 3: Compute $L_{n}^{\text{\textminus}1}(h)$ and $U_{n}^{\text{\textminus}1}(h)$

\begin{equation*}
U^{-1}[i,j]=\begin{cases}
\frac{(-1)^{i+j}}{((i-1)!(j-i)!)h^{-j+i+n}} & \text{ if }j\ge i,\\
0 & \text{otherwise},
\end{cases},\quad L^{-1}[j,i]=\begin{cases}
(-1)^{j-i}h^{i-j}\frac{(j-1)!}{(i-1)!}\begin{pmatrix}n+j-i-1\\
j-i
\end{pmatrix} & \text{ if }j\ge i,\\
0 & \text{otherwise}.
\end{cases}
\end{equation*}

Step 4: Compute $A_{n}^{-1}=U_{n}^{-1}L_{n}^{-1}$.

Step 5: Compute $C$ using Theorem \ref{theo: explicit formula}

\begin{equation*}
C_{n,h}(x_{0},\ldots,x_{n-1};y_{0},\ldots,y_{n-1})=\mathbf{b}_{n}(h)^{T}B_{n}(h)[A_{n}(h)]^{-1}\mathbf{b}_{n}(h).
\end{equation*}

\end{algorithm}
Below we show $B_n(h),A_n(h), L_n(h), U_n(h), L_n^{-1}(h), U_n^{-1}(h)$ and $C_{n,h}$ for $n=1,2,3,4$ computed using Algorithm \ref{al: alg}. For simplicity of notation, we leave out the dependence on $h$ of the matrices.

\textbf{For $n=1$}.
\begin{align*}
& B=\left(
\begin{array}{c}
 1 \\
\end{array}
\right),
\\&A=\left(
\begin{array}{c}
 h \\
\end{array}
\right), \quad A^{-1}=\left(
\begin{array}{c}
 \frac{1}{h} \\
\end{array}
\right),
\\&L=\left(
\begin{array}{c}
 1 \\
\end{array}
\right),\quad U=\left(
\begin{array}{c}
 h \\
\end{array}
\right),
\\& L^{-1}=\left(
\begin{array}{c}
 1 \\
\end{array}
\right),\quad U^{-1}=\left(
\begin{array}{c}
 \frac{1}{h} \\
\end{array}
\right),
\\& C_{1,h}(x_0,y_0)=\frac{1}{h}\left(-x_0+y_0\right)^2.
\end{align*}
\textbf{For $n=2$}
\begin{align*}
&B=\left(
\begin{array}{cc}
 0 & -6 \\
 2 & 6 h \\
\end{array}
\right),
\\&A=\left(
\begin{array}{cc}
 h^2 & h^3 \\
 2 h & 3 h^2 \\
\end{array}
\right), \quad A^{-1}=\left(
\begin{array}{cc}
 \frac{3}{h^2} & -\frac{1}{h} \\
 -\frac{2}{h^3} & \frac{1}{h^2} \\
\end{array}
\right),
\\&
L=\left(
\begin{array}{cc}
 1 & 0 \\
 \frac{2}{h} & 1 \\
\end{array}
\right),\quad U=\left(
\begin{array}{cc}
 h^2 & h^3 \\
 0 & h^2 \\
\end{array}
\right),
\\&L^{-1}=\left(
\begin{array}{cc}
 1 & 0 \\
 -\frac{2}{h} & 1 \\
\end{array}
\right),\quad U^{-1}=\left(
\begin{array}{cc}
 \frac{1}{h^2} & -\frac{1}{h} \\
 0 & \frac{1}{h^2} \\
\end{array}
\right),
\\& C_{2,h}(x_0,x_1;y_0,y_1)=\frac{1}{h}\left[\left(y_1-x_1\right)^2+3 \left(y_1-x_1-\frac{2 \left(y_0-x_0-h x_1\right)}{h}\right)^2\right].
\end{align*}
\textbf{For $n=3$}
\begin{align*}
&B=\left(
\begin{array}{ccc}
 0 & 0 & 120 \\
 0 & -24 & -120 h \\
 6 & 24 h & 60 h^2 \\
\end{array}
\right),
\\&A=\left(
\begin{array}{ccc}
 h^3 & h^4 & h^5 \\
 3 h^2 & 4 h^3 & 5 h^4 \\
 6 h & 12 h^2 & 20 h^3 \\
\end{array}
\right),\quad A^{-1}=\left(
\begin{array}{ccc}
 \frac{10}{h^3} & -\frac{4}{h^2} & \frac{1}{2 h} \\
 -\frac{15}{h^4} & \frac{7}{h^3} & -\frac{1}{h^2} \\
 \frac{6}{h^5} & -\frac{3}{h^4} & \frac{1}{2 h^3} \\
\end{array}
\right)
\\&L=\left(
\begin{array}{ccc}
 1 & 0 & 0 \\
 \frac{3}{h} & 1 & 0 \\
 \frac{6}{h^2} & \frac{6}{h} & 1 \\
\end{array}
\right),\quad U=\left(
\begin{array}{ccc}
 h^3 & h^4 & h^5 \\
 0 & h^3 & 2 h^4 \\
 0 & 0 & 2 h^3 \\
\end{array}
\right),
\\&L^{-1}=\left(
\begin{array}{ccc}
 1 & 0 & 0 \\
 -\frac{3}{h} & 1 & 0 \\
 \frac{12}{h^2} & -\frac{6}{h} & 1 \\
\end{array}
\right),\quad U^{-1}=\left(
\begin{array}{ccc}
 \frac{1}{h^3} & -\frac{1}{h^2} & \frac{1}{2 h} \\
 0 & \frac{1}{h^3} & -\frac{1}{h^2} \\
 0 & 0 & \frac{1}{2 h^3} \\
\end{array}
\right),
\end{align*}
and
\begin{align*}
C_{3,h}(x_0,x_1,x_2;y_0,y_1,y_2)&=\frac{1}{h}\Bigg[\left(y_2-x_2\right)^2+3 \left(y_2-x_2-\frac{2 \left(y_1-x_1-h x_2\right)}{h}-\right)^2
\\&\quad+5 \left(y_2-x_2-\frac{6 \left(y_1-x_1-h x_2\right)}{h}+\frac{12 \left(y_0-x_0-h
x_1-\frac{h^2 x_2}{2}\right)}{h^2}\right)^2\Bigg].
\end{align*}
\textbf{For $n=4$}
\begin{align*}
&B=\left(
\begin{array}{cccc}
 0 & 0 & 0 & -5040 \\
 0 & 0 & 720 & 5040 h \\
 0 & -120 & -720 h & -2520 h^2 \\
 24 & 120 h & 360 h^2 & 840 h^3 \\
\end{array}
\right),
\\&A=\left(
\begin{array}{cccc}
 h^4 & h^5 & h^6 & h^7 \\
 4 h^3 & 5 h^4 & 6 h^5 & 7 h^6 \\
 12 h^2 & 20 h^3 & 30 h^4 & 42 h^5 \\
 24 h & 60 h^2 & 120 h^3 & 210 h^4 \\
\end{array}
\right),\quad A^{-1}=\left(
\begin{array}{cccc}
 \frac{35}{h^4} & -\frac{15}{h^3} & \frac{5}{2 h^2} & -\frac{1}{6 h} \\
 -\frac{84}{h^5} & \frac{39}{h^4} & -\frac{7}{h^3} & \frac{1}{2 h^2} \\
 \frac{70}{h^6} & -\frac{34}{h^5} & \frac{13}{2 h^4} & -\frac{1}{2 h^3} \\
 -\frac{20}{h^7} & \frac{10}{h^6} & -\frac{2}{h^5} & \frac{1}{6 h^4} \\
\end{array}
\right),
\\& L=\left(
\begin{array}{cccc}
 1 & 0 & 0 & 0 \\
 \frac{4}{h} & 1 & 0 & 0 \\
 \frac{12}{h^2} & \frac{8}{h} & 1 & 0 \\
 \frac{24}{h^3} & \frac{36}{h^2} & \frac{12}{h} & 1 \\
\end{array}
\right),\quad U=\left(
\begin{array}{cccc}
 h^4 & h^5 & h^6 & h^7 \\
 0 & h^4 & 2 h^5 & 3 h^6 \\
 0 & 0 & 2 h^4 & 6 h^5 \\
 0 & 0 & 0 & 6 h^4 \\
\end{array}
\right),
\\&L^{-1}=\left(
\begin{array}{cccc}
 1 & 0 & 0 & 0 \\
 -\frac{4}{h} & 1 & 0 & 0 \\
 \frac{20}{h^2} & -\frac{8}{h} & 1 & 0 \\
 -\frac{120}{h^3} & \frac{60}{h^2} & -\frac{12}{h} & 1 \\
\end{array}
\right), \quad U^{-1}=\left(
\begin{array}{cccc}
 \frac{1}{h^4} & -\frac{1}{h^3} & \frac{1}{2 h^2} & -\frac{1}{6 h} \\
 0 & \frac{1}{h^4} & -\frac{1}{h^3} & \frac{1}{2 h^2} \\
 0 & 0 & \frac{1}{2 h^4} & -\frac{1}{2 h^3} \\
 0 & 0 & 0 & \frac{1}{6 h^4} \\
\end{array}
\right),
\end{align*}
and
\begin{align*}
&C_{4,h}(x_0,x_1,x_2,x_3;y_0,y_1,y_2,y_3)
\\&\quad=\frac{1}{h}\Bigg[\left(y_3-x_3\right)^2+3 \left(y_3-x_3-\frac{2 \left(y_2-x_2-h x_3\right)}{h}\right)^2\Bigg]
\\&\quad+\frac{5}{h}\left(y_3-x_3-\frac{6 \left(y_2-x_2-h x_3\right)}{h}+\frac{12 \left(y_1-x_1-h x_2-\frac{h^2 x_3}{2}\right)}{h^2}\right)^2
\\&\quad+\frac{7}{h}\Bigg(y_3-x_3-\frac{12 \left(y_2-x_2-h x_3\right)}{h}+\frac{60 \left(y_1-x_1-h x_2-\frac{h^2 x_3}{2}\right)}{h^2}
\\&\qquad\qquad-\frac{120 \left(y_0-x_0-h x_1-\frac{h^2 x_2}{2}-\frac{h^3
x_3}{6}\right)}{h^3}\Bigg)^2.
\end{align*}
We observe that these cost functions satisfy a property that
\begin{align*}
C_{n,h}(x_0,x_1,\ldots, x_{n-1};y_0,y_1,\ldots,y_{n-1})&=C_{n-1,h}(x_1,\ldots, x_{n-1};y_1,\ldots,y_{n-1})\\&\qquad+\Big|\sum_{j=0}^{n-1}\frac{\alpha_{j}}{h^{n-1-j}}\Big[y_j-\sum_{i=j}^{n-1}\frac{h^{i-j}}{(i-j)!}x_i\Big]\Big|^2,
\end{align*}
for some $\{\alpha_j\}_{j=0}^{n-1}$, which is in accordance with Lemma \ref{theo: qual theo 2}.

\section*{Acknowledgements}
M. H. Duong was supported by ERC Starting Grant 335120. We would like to thank referees for their suggestions to improve the presentation of the paper.

\end{document}